\theoremstyle{plain}
\newtheorem{theorem}{Theorem}[section]
\newtheorem{lemma}[theorem]{Lemma}
\newtheorem{propn}[theorem]{Proposition}
\newtheorem{corollary}[theorem]{Corollary}
\newtheorem{assumption}[theorem]{Assumption}
\theoremstyle{remark}
\newtheorem{remark}[theorem]{Remark}
\newtheorem*{acknow}{Acknowledgement}
\newtheorem{definition}[theorem]{Definition}
\newtheorem{example}[theorem]{Example}
\DeclareMathOperator\supp{supp}
\DeclareMathOperator\dom{dom}
\newcommand{\lt}{L}
\newcommand{\rt}{R}
\newcommand\sub{\subseteq}
\newcommand\conv{\star}
\newcommand{\Com}{\Delta}
\newcommand{\QG}{\G}
\DeclareMathOperator\clsp{\overline{\text{span}}}
\newcommand{\Z}{\mathbf{Z}}
\newcommand{\set}[2]{\{\,{\textstyle#1}:\,{\textstyle #2}\,\}}
\newcommand{\inv}{^{-1}}
\newcommand\conj{\overline}
\newcommand\lone{\mathrm{L}^1}
\newcommand\ltwo{\mathrm{L}^2}
\newcommand{\C}{\mathrm{C}}
\newcommand{\G}{\mathbb{G}}
\renewcommand{\H}{\mathbb{H}}
\newcommand{\fatnu}{\mathbf{\nu}}
\newcommand{\fatrho}{\mathbf{\rho}}
\newcommand{\ot}{\otimes}
\newcommand{\cop}{\Delta}
\newcommand{\id}{\mathrm{id}}
\newcommand{\M}{\mathrm{M}}
\newcommand{\Cou}{\epsilon}
\newcommand{\SU}{\mathrm{SU}}
\newcommand{\T}{\mathbf{T}}
\newcommand{\CG}{\C_0 (\G)}
\newcommand{\rom}{|\omega|_r}
\newcommand{\lom}{|\omega|_l}
\newcommand\vnot{\mathop{\overline\otimes}}
\newcommand{\wot}{\vnot}
\newcommand{\Hil}{\mathsf{H}}
\newcommand{\alg}{\mathsf{A}}
\newcommand{\clg}{\mathsf{C}}
\newcommand{\blg}{\mathsf{B}}
\newcommand{\Kil}{\mathsf{K}}
\newcommand{\K}{K}
\newcommand{\Alg}{\mathcal{A}}
\newcommand{\Npsi}{\mathfrak{N}_{\psi}}
\newcommand{\phitwo}{\phi^{(2)}}
\newcommand{\psitwo}{\psi^{(2)}}
\newcommand{\Mtwo}{M_2(\C_0(\G))}
\newcommand{\la}{\langle}
\newcommand{\ra}{\rangle}
\newcommand{\Lonenat}{\lone_{\sharp} (\G)}
\newenvironment{rlist}
{

\begin{enumerate}}
{\end{enumerate}}
\begin{document}

\title[Contractive idempotents on quantum groups]{Contractive idempotents on
       locally compact quantum groups}

\keywords{Locally compact quantum group,
contractive idempotent functional, ternary ring of operators}
\subjclass[2000]{Primary 46L65, Secondary 43A05, 46L30, 60B15}

\begin{abstract}
\noindent A general form of contractive idempotent functionals  on coamenable locally compact
quantum groups is obtained, generalising the result of Greenleaf on
contractive measures on locally compact groups. The image of a
convolution operator associated to a contractive idempotent is shown
to be a ternary ring of operators. As a consequence a one-to-one
correspondence between contractive idempotents and a certain
class of ternary rings of operators is established.
\end{abstract}

\author{Matthias Neufang}
\address{School of Mathematics \& Statistics, Carleton University, Ottawa, ON,
Canada K1S 5B6}
\address{ Universit\'e Lille 1 - Sciences et Technologies, UFR de Math\'ematiques, Laboratoire de Math\'e--matiques
Paul Painlev\'e - UMR CNRS 8524, 59655 Villeneuve d'Ascq C\'edex, France}
\email{mneufang@math.carleton.ca}

\author{Pekka Salmi}
\address{Department of Mathematical Sciences,
University of Oulu, PL 3000, FI-90014 Oulun yliopisto, Finland}
\email{pekka.salmi@iki.fi}

\author{Adam Skalski}
\address{Institute of Mathematics of the Polish Academy of Sciences,
ul.\'Sniadeckich 8, 00-956 Warszawa, Poland}
\email{a.skalski@impan.pl}

\author{Nico Spronk}
\address{Department of Pure Mathematics,
University of Waterloo, Waterloo, ON, N2L 3G1, Canada}
\email{n.spronk@uwaterloo.ca}

\maketitle

\section*{Introduction}

Idempotent probability measures on locally compact groups arise
naturally as limit distributions of random walks and have been studied
in this context at least since the 1940s (for the history of the
investigations of such idempotent measures we refer to the recent
survey \cite{Pekkasurvey}). They all turn out to arise as Haar
measures on compact subgroups of the original group. There are several
possible generalisations of the problem of characterising all
idempotent probability measures. On one hand one can ask about
\emph{all} idempotent measures. The problem turns out to be very hard
and so far has only been solved in the case of abelian groups in a
celebrated paper by P.\,Cohen (\cite{Cohen}).  In fact, a solution to this
problem for discrete groups would solve Kaplansky's idempotent
conjecture.  It is significantly
easier to analyse all \emph{contractive} (with respect to the total
variation norm) idempotent measures -- in 1965 F.P.\,Greenleaf
(\cite{greenleaf:homo}) showed that they arise as combinations of Haar measures
on compact subgroups and characters on those subgroups. 
Note also that the dual issue of providing the characterisation of idempotents in the Fourier-Stieltjes algebra of a locally compact group was resolved by B.\,Host in \cite{Host}. 
On the other
hand one can ask about the counterparts of probability measures on
locally compact \emph{quantum} groups, usually called idempotent
states. The study of the latter was initiated in \cite{FSJAlg} and
continued later for example in \cite{salmi-skalski:idem} (once again
the description of these developments and a full list of references
can be located in \cite{Pekkasurvey}). An important feature of
idempotent states is that they need not be \emph{Haar idempotents},
i.e.\ Haar states on compact quantum subgroups.

In this article we combine the two generalisations and investigate
\emph{contractive idempotents on locally compact quantum
  groups}. These are idempotent functionals on $\C_0(\G)$, the algebra
of continuous functions on a locally compact quantum group $\G$, and as
such  can be viewed as quantum counterparts of contractive idempotent
measures on locally compact groups. In particular they include
contractive idempotent functions in $B(G)$, the Fourier-Stieltjes
algebra of a locally compact group $G$. Similarly to the classical
case, contractive idempotents on $\G$ arise as limits of iterated
convolutions of a given contractive quantum measure. They also turn
out to be of great relevance for the study of the harmonic functions on quantum groups, i.e.\
fixed points of convolution operators associated with
contractive quantum measures (in the case of standard groups and their
duals such structures were studied for example in the book
\cite{Chu-Lau} and, more recently, in \cite{CL2};  for Poisson boundaries associated to \emph{states} on
locally compact quantum groups we refer to the recent preprint
\cite{KNR} and references therein).
For technical reasons we assume that the locally compact quantum
groups we consider are \emph{coamenable}. Under this assumption we
show that each contractive idempotent $\omega$ on a quantum group
$\QG$ is associated via its polar decomposition to two (possibly
identical) idempotent states. This leads to a general characterization
of the form of $\omega$ and can be further simplified in the case when
the idempotent states associated to $\omega$ are Haar idempotents.
In the latter case the resulting
structure precisely mirrors that obtained by Greenleaf for classical
groups: a contractive idempotent $\omega$ arises from a Haar state on
a compact quantum subgroup and a character (i.e.\ a group-like
element) on that subgroup. We show also that in this case the left
and right absolute values of $\omega$ coincide. These results contain
in particular the classical theorem of Greenleaf.

In \cite{salmi-skalski:idem} (see also \cite{S}) idempotent states on
$\G$ were shown to be closely related to \emph{right invariant
  expected} subalgebras of $\C_0(\G)$. It turns out that a similar
result holds true for contractive idempotents, but subalgebras have to
be replaced by \emph{ternary rings of operators (TROs)}. The appearance of
the TRO structure is related to the fact that the left
convolution operator associated to a contractive idempotent satisfies
the algebraic relations characterising the so-called TRO conditional
expectations. The TROs arising in this way are invariant under
right convolution operators; moreover their \emph{linking
  algebras} contained in two-by-two matrices over $\C_0(\G)$ admit
conditional expectations preserving a suitable amplification of the
left Haar weight. We show that  if $\G$ is \emph{unimodular}, then in
fact there is a bijective correspondence between such TROs in
$\C_0(\G)$ and contractive idempotents on $\G$. We would like to point
out that such TRO structures play a fundamental role in the study of
the extended Poisson boundaries for contractive quantum measures,
which we undertake in the forthcoming work \cite{NSSS}.

A detailed plan of the paper is as follows.  In Section 1 we
set up notation, introduce basic terminology of quantum group theory
and recall fundamental results on idempotent states. In Section
2 we characterise the general form of contractive idempotents and
analyse their connection to idempotent states.
In section 3 we study the properties of the convolution
operator associated with a contractive idempotent, showing in
particular that its image is a TRO; further in Section 4 we establish
a one-to-one correspondence between contractive idempotents and a
certain class of TROs. Finally Section 5 shows how our results allow
us to obtain quickly the known characterisations for $\G$ being either
a classical group or the dual of a classical group, and presents some
examples.

A word of warning is in place -- the quantum group notation used in
this paper differs from that in \cite{salmi-skalski:idem}; it is
however consistent with the notation more and more often adopted in
the modern literature of the subject.

\section{Preliminaries}

In this section we introduce our notation, prove a few technical results we will need later, and recall basic information on idempotent states on locally compact quantum groups.

\subsection{$C^*$-algebra notation}
\label{C*notations}

Let $\alg$ be a $C^*$-algebra. The state space of $\alg$ will be
denoted $S(\alg)$,  the multiplier algebra of $\alg$ by $M(\alg)$. We
will often use the strict topology and strict extensions of maps to
multiplier algebras without any comment -- the detailed arguments can
be found in \cite{Lance} or in Section 1 of
\cite{salmi-skalski:idem}. The symbol $\ot$ will denote the spatial
tensor product of $C^*$-algebras and $\wot$ will be reserved for the
von Neumann algebraic tensor product. If $\Hil$ is a Hilbert space
then $\K_{\Hil}$ denotes the algebra of compact operators on $\Hil$. If $\Kil$ is an additional Hilbert space and $X\subset B(\Hil, \Kil)$, $Y\subset B(\Kil, \Hil)$  we write $X^*=\{x^*:x\in X\}$ and $\langle X Y \rangle= \overline{\textup{span}} \{xy: x\in X, y\in Y\}$.

If $\omega \in S(\alg)$ then
we write
$N_{\omega}=\{a\in \alg: \omega(a^*a)=0\}$.

If  $\omega \in \alg^*$, its
unique normal extension to a functional on the von Neumann algebra
$\alg^{**}$ will be sometimes denoted by the same symbol.

We have natural left and right actions of $\alg$ on $\alg^*$. They can be extended in a natural sense to actions of $\alg^{**}$:
 for $\omega\in\alg^*$, $x,y\in\alg^{**}$,
write
\[
\omega.x(y) = \omega(xy)\qquad\text{and}\qquad
x.\omega(y) = \omega(yx).
\]

By \cite{takesaki:vol1}, given a functional $\omega \in \alg^*$ there
is a (unique) polar decomposition
\[
\omega = u.|\omega|_r
\]
where $|\omega|_r$ is a positive functional in $\alg^*$, called the \emph{right absolute value} of $\omega$,
and $u$ is a partial isometry in $\alg^{**}$ such that
$u^*u$ is the support of $|\omega|_r$
(i.e.\ $u^*u$ is the minimal projection
satisfying $|\omega|_r(u^*u x) = |\omega|_r(xu^*u) = |\omega|_r(x)$).
It follows that
\[
|\omega|_r = u^*.\omega.
\]
Putting $|\omega|_l = u.|\omega|_r.u^*$
we obtain another positive functional $|\omega|_l$ (the \emph{left absolute value}  of $\omega$) such that
\[ \omega = |\omega|_l.u,\qquad
|\omega|_l = \omega.u^*.
\]
(Now $uu^*$ is the support of $|\omega|_l$.)

\subsection{Locally compact quantum groups -- general background}

We follow the $C^*$-algebraic approach to locally compact quantum groups
due to Kustermans and Vaes \cite{KV}.

A \emph{comultiplication} on a $C^*$-algebra $\alg$ is a nondegenerate
$^*$-homomorphism $\Com: \alg\to M(\alg\otimes\alg)$ that is
coassociative:
\[
(\id_\alg\otimes \Com)\Com = (\Com\otimes\id_\alg)\Com.
\]
A $C^*$-algebra $\alg$ with a comultiplication $\Com$
is \emph{the algebra of continuous functions vanishing at infinity on
 a locally compact quantum group} if $(\alg, \cop)$ satisfies
the quantum cancellation laws
\begin{equation}
\clsp\Com(\alg)(\alg\otimes 1_\alg) = \alg\ot \alg,\quad
\clsp\Com(\alg)(1_\alg\otimes\alg)  = \alg\ot \alg
\label{cancellation} \end{equation}
and admits a left Haar weight $\phi$ and a right Haar weight $\psi$.
That is, $\phi$ and $\psi$ are faithful KMS-weights on $\alg$
such that $\phi$ is \emph{left invariant}
\[
\phi((\omega\ot\id)\Com(a)) = \omega(1_\alg)\phi(a)
\qquad (\omega\in \alg^*_+, a\in \alg_+, \phi(a)< \infty )
\]
and $\psi$ is \emph{right invariant}
\[
\psi((\id\ot\omega)\Com(a)) = \omega(1_\alg)\psi(a)
\qquad (\omega\in \alg^*_+, a\in \alg_+, \psi(a)< \infty)
\]
(for the meaning of the KMS property we refer the reader to \cite{KV}).
In such a case we use the notation $\alg=\C_0(\QG)$ and call $\QG$ a
\emph{locally compact quantum group}.

If the $C^*$-algebra $\C_0(\QG)$ is unital, we say that the quantum group
$\QG$ is \emph{compact} and write $\C(\G)$ instead of $\C_0(\G)$. In this case there is a unique \emph{Haar state}
of $\QG$ that is both left and right invariant. A unitary $u\in \C(\G)$ is said to be \emph{group-like} if $\Com(u) = u \ot u$.

A \emph{counit} is a linear functional $\epsilon$ on $\C_0(\QG)$ such that
\[
(\epsilon\ot\id_{\C_0(\QG)})\Com = (\id_{\C_0(\QG)}\ot\epsilon)\Com = \id_{\C_0(\QG)}.
\]
A quantum group that admits a bounded counit is said to be
\emph{coamenable} \cite{bedos-tuset}.

\begin{assumption}
From now on we assume that all the locally compact quantum groups studied in this paper are coamenable.
\end{assumption}

Note that due to the assumption of coamenability,
the $C^*$-algebra $\C_0(\G)$ coincides with the universal $C^*$-algebra
$\C_0^u(\G)$  associated to $\G$ (as defined by Kustermans
\cite{kus:univ}). The multiplier algebra of $\C_0(\G)$ will be sometimes denoted $\C_b(\G)$.

The GNS representation space for the left Haar weight will be denoted
by $\ltwo(\QG)$. We may in fact assume that $\C_0(\QG)$ is a
nondegenerate subalgebra of $B(\ltwo(\QG))$.
The Banach space dual of $\C_0(\G)$ will be denoted $\M(\G)$ and
called the \emph{measure algebra of $\QG$}.
It is a Banach algebra with the product defined by
\[ \omega \conv \mu := (\omega \ot \mu) \circ \Com, \;\;\; \omega, \mu
\in \M(\G),\]
and since $\G$ is coamenable, $\M(\G)$ is unital.

Given $\omega \in \M(\G)$ the associated \emph{left convolution} operator $L_{\omega}:\C_0(\G) \to \C_0(\QG)$ is defined by the formula
\[ \lt_{\omega} (a) = (\omega \ot \id_{\C_0(\G)}) (\Delta(a)), \;\;\; a\in \C_0(\G).\]
(the right convolution operators $\rt_{\omega}$ are defined in an analogous way). They are automatically strict and hence extend to maps on $\C_b(\G)$.
The following lemma from \cite{salmi-skalski:idem} will allow us later to identify the (left) convolution operators.

\begin{lemma}[Lemma 1.6 of \cite{salmi-skalski:idem}] \label{invar}
Let $T:\C_0(\G) \to \C_0(\G)$ be a completely bounded map such that $T\ot\id_{\C_0(\G)}: \C_0(\G) \ot \C_0(\G) \to \C_0(\G) \ot \C_0(\G)$ is
strictly continuous on bounded subsets. Suppose that $Z\sub \M(\G)$ is weak$^*$-dense and invariant under the right action of some dense subalgebra $\Alg$ of $\C_0(\G)$. Then the following conditions are equivalent:
\begin{rlist}
\item $T=\lt_{\mu}$, for some $\mu \in \M(\G)$;
\item \label{commutT}
      $(T \ot \id_{\alg} ) \Com = \Com  T$;
\item $T \rt_{\nu} = \rt_{\nu} T$ for all $\nu \in Z$.
\end{rlist}
If the above conditions hold, $T= \rt_{\Cou \circ T}$ is a linear combination of completely positive nondegenerate
maps and the equality in (iii) is valid for all $\nu \in \M(\G)$.
\end{lemma}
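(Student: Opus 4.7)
My plan is to prove the three implications cyclically. For \textbf{(i)}$\Rightarrow$\textbf{(ii)}, I would apply coassociativity of $\Delta$ directly to $\lt_\mu$:
\[
\Delta \lt_\mu(a) = (\mu\ot\id\ot\id)(\id\ot\Delta)\Delta(a) = (\mu\ot\id\ot\id)(\Delta\ot\id)\Delta(a) = (\lt_\mu\ot\id)\Delta(a).
\]
For \textbf{(ii)}$\Rightarrow$\textbf{(iii)}, in fact for every $\nu\in\M(\G)$ and not just $\nu\in Z$, I would slice both sides of the identity in (ii) by $\id\ot\nu$. The elementary relation $(\id\ot\nu)(T\ot\id)=T(\id\ot\nu)$ is obvious on the algebraic tensor product, and by the strict continuity of $T\ot\id$ on bounded subsets it passes to $M(\C_0(\G)\ot\C_0(\G))$; this yields $\rt_\nu T=T\rt_\nu$ at once.

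The main difficulty lies in \textbf{(iii)}$\Rightarrow$\textbf{(ii)}. I would fix $a\in\C_0(\G)$ and consider
\[ x_a:=(T\ot\id)\Delta(a)-\Delta T(a)\in M\bigl(\C_0(\G)\ot\C_0(\G)\bigr), \]
where the first summand is well defined thanks to the strict continuity hypothesis on $T\ot\id$. Using the slicing identity recalled above, hypothesis (iii) yields
\[ (\id\ot\nu)(x_a) = T\rt_\nu(a) - \rt_\nu T(a) = 0 \qquad (\nu\in Z). \]
For any $\omega\in\C_0(\G)^*$ the functional $\nu\mapsto(\omega\ot\nu)(x_a)$ is weak$^*$-continuous on $\M(\G)$, so it must vanish on the weak$^*$-closure of $Z$, which is all of $\M(\G)$. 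Since $\omega$ was arbitrary, separation by simple slice functionals forces $x_a=0$. (The invariance of $Z$ under the right action of $\Alg$ does not appear to be essential here; weak$^*$-density suffices.) The delicate point is that $(T\ot\id)\Delta(a)$ lives in the multiplier algebra rather than in $\C_0(\G)\ot\C_0(\G)$ itself, which is precisely where the strict continuity assumption becomes indispensable.

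Finally, for \textbf{(ii)}$\Rightarrow$\textbf{(i)} I would invoke coamenability, which provides the bounded counit $\Cou$, and set $\mu:=\Cou\circ T\in\M(\G)$. Slicing (ii) by $\Cou\ot\id$ and using the counit axiom on the left gives
\[ T(a) = (\Cou\ot\id)\Delta T(a) = (\Cou\ot\id)(T\ot\id)\Delta(a) = \lt_{\Cou\circ T}(a), \]
so $T=\lt_\mu$. For the concluding structural remark, I would decompose $\mu$ as a linear combination of at most four states; each such state $\nu$ yields $\lt_\nu=(\nu\ot\id)\Delta$, which is completely positive and nondegenerate because $\Delta$ is a nondegenerate $^*$-homomorphism, so $T$ decomposes accordingly. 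The equality in (iii) for an arbitrary $\nu\in\M(\G)$ is then immediate from the already-proved implication (ii)$\Rightarrow$(iii).
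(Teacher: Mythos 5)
The paper does not actually prove this lemma --- it is imported as Lemma 1.6 of \cite{salmi-skalski:idem} --- so your argument can only be judged on its own terms. The implications (i)$\Rightarrow$(ii), (ii)$\Rightarrow$(iii) and (ii)$\Rightarrow$(i), and the concluding decomposition of $\lt_\mu$ into completely positive nondegenerate maps via the Jordan decomposition of $\mu$, are all correct and standard. The gap is in (iii)$\Rightarrow$(ii), and it sits exactly at the point you wave away: the invariance of $Z$ under the right action of $\Alg$ is not decorative, it is the hypothesis that makes the density argument legitimate. Your element $x_a=(T\ot\id)\Com(a)-\Com(T(a))$ lies in $M(\C_0(\G)\ot\C_0(\G))$ but in general \emph{not} in $\C_0(\G)\ot\C_0(\G)$, since $\Com(T(a))$ does not. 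Hence $(\omega\ot\id)(x_a)$ is merely a multiplier of $\C_0(\G)$, and the functional $\nu\mapsto(\omega\ot\nu)(x_a)$ --- meaning the strict/normal extension of $\nu$ evaluated at that multiplier --- is weak$^*$-continuous on $\M(\G)$ \emph{only if} $(\omega\ot\id)(x_a)$ already belongs to $\C_0(\G)$, which is essentially what you are trying to prove. A bounded functional on $\M(\G)$ induced by a genuine multiplier can vanish on a weak$^*$-dense subspace without vanishing: already for $\C_0(\G)=c_0$ the element $1\in\ell^\infty=M(c_0)$ annihilates the weak$^*$-dense hyperplane $\{\nu\in\ell^1:\sum_n\nu_n=0\}$. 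So ``vanishes on $Z$, hence on its weak$^*$-closure'' is not available, and your proof of $x_a=0$ collapses.

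The repair uses the discarded hypothesis. For $\nu\in Z$ and $b\in\Alg$ one has $\nu.b\in Z$ and $\rt_{\nu.b}(a)=(\id\ot\nu)\bigl((1\ot b)\Com(a)\bigr)$, where $(1\ot b)\Com(a)\in\C_0(\G)\ot\C_0(\G)$ by the cancellation laws. Since $T\ot\id$ commutes with left multiplication by $1\ot b$ (check on elementary tensors, extend by norm and then strict continuity on bounded sets), applying (iii) to $\nu.b$ gives $(\id\ot\nu)\bigl((1\ot b)x_a\bigr)=0$ for all $\nu\in Z$, $b\in\Alg$. Now $(1\ot b)x_a=(T\ot\id)\bigl((1\ot b)\Com(a)\bigr)-(1\ot b)\Com(T(a))$ genuinely lies in $\C_0(\G)\ot\C_0(\G)$, so its slices $(\omega\ot\id)\bigl((1\ot b)x_a\bigr)$ lie in $\C_0(\G)$ and the functionals $\nu\mapsto(\omega\ot\nu)\bigl((1\ot b)x_a\bigr)$ are honestly weak$^*$-continuous; weak$^*$-density of $Z$ now does give $(1\ot b)x_a=0$ for every $b\in\Alg$, and density of $\Alg$ in $\C_0(\G)$ together with nondegeneracy of the tensor product forces $x_a=0$. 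With this correction your cyclic scheme closes up; the rest of your write-up, including the observation that (iii) then holds for all $\nu\in\M(\G)$ via the already-established (ii)$\Rightarrow$(iii), is fine.
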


\begin{definition}
A $C^*$-subalgebra $\clg \sub \C_0(\G)$ is said to be \emph{right
invariant} if $R_{\mu}(\clg) \sub \clg$ for
all $\mu \in \M(\G)$.
\end{definition}

Define $\lone(\QG)$ to be the collection of those functionals in
$\M(\G)$ which admit normal extensions to $B(\ltwo(\G))$.
The \emph{antipode} $S$ of $\G$ is a (possibly unbounded) operator
on $\C_0(\G)$; the set
\[
\set{(\sigma\ot\id)(V)}{\sigma\in B(\ltwo(\G))_*},\]
where $V\in B(\ltwo(\G)\ot_2\ltwo(\G))$
is the \emph{right multiplicative unitary} of $\G$,
gives a core for $S$ and
\[
S\bigl((\sigma\ot\id)(V)\bigr) = (\sigma\ot\id)(V^*),\;\;\; \sigma\in B(\ltwo(\G))_*.
\]
For $\omega\in\M(\G)$, define $\conj{\omega}\in\M(\G)$ by
$\conj{\omega}(a) = \conj{\omega(a^*)}$.
When $\conj{\omega}\circ S$ (defined on $\dom(S)$)
has a \emph{bounded} extension to all of $\C_0(\G)$,
we write $\omega^\sharp$ for this functional.
Then $\Lonenat$ denotes those $\omega$ in $\lone(\G)$
for which $\overline{\omega} \circ S$ is bounded.

\subsection{Idempotent states on locally compact quantum groups}

\begin{definition}
An \emph{idempotent functional} in $\M(\QG)$ is a nonzero functional $\omega \in \M(\G)$ such that $\omega \conv \omega =\omega$.
\end{definition}

Idempotent states on $\M(\G)$ were studied in \cite{salmi-skalski:idem} (see also \cite{FST} for the compact case). Here we quote the main results and some auxiliary lemmas we will use later. First we need some further definitions.

\begin{definition}
Assume that $\G$ is a coamenable locally compact quantum group and
$\H$ a compact quantum group. Then $\H$ is a \emph{closed quantum
  subgroup} of $\G$ if there exists a surjective $^*$-homomorphism
$\pi_{\H}:\C_0(\G) \to \C(\H)$ intertwining the respective coproducts
(sometimes we shorten this to write that $(\H, \pi)$ is a compact
quantum subgroup of $\G$).
\end{definition}

Note that there are various possible definitions of locally compact quantum subgroups; all of them however coincide when the quantum subgroup in question is compact (see \cite{DKSS} for a complete discussion).

\begin{definition}
An idempotent state $\omega \in \M(\G)$ is a \emph{Haar idempotent} if there
exists a compact quantum subgroup $\H$ of $\G$ such that $\omega =
h_{\H} \circ \pi_{\H}$, where  $h_{\H}$ denotes the Haar state on $\H$.
\end{definition}

\begin{theorem} [Theorem 3.7 of \cite{salmi-skalski:idem}] \label{thm:Haar-equiv}
Let $\omega$ be an idempotent state in $\M(\G)$.
Then the following are equivalent:
\begin{rlist}
\item $\omega$ is a Haar idempotent;
\item $N_{\omega}$ is an ideal
      \textup{(}equivalently a $^*$-subspace\textup{)}.
\end{rlist}
\end{theorem}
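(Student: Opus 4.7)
The plan is to handle the easy direction and the ``ideal vs.\ $^*$-subspace'' equivalence first, then attack the substantive implication (ii)$\Rightarrow$(i) by constructing a compact quantum subgroup directly from the quotient $\C_0(\G)/N_\omega$.

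The internal equivalence in (ii) is a general $C^*$-algebraic fact: $N_\omega$ is always a closed left ideal (by Cauchy--Schwarz for states), and a closed left ideal is two-sided iff it is $^*$-closed---indeed, if $a\in N_\omega$ forces $a^*\in N_\omega$ then $b^*a^*\in N_\omega$, whence $ab = (b^*a^*)^*\in N_\omega$, while the converse is immediate since closed two-sided $C^*$-ideals are self-adjoint. The implication (i)$\Rightarrow$(ii) is then quick: the Haar state of a compact quantum group is faithful on its reduced $C^*$-algebra, so $N_{h_\H}$ equals the kernel of the surjection $\C(\H)\to\C^{\mathrm{red}}(\H)$, a closed two-sided ideal; hence so is $N_\omega = \pi_\H\inv(N_{h_\H})$.

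For (ii)$\Rightarrow$(i), set $\B:=\C_0(\G)/N_\omega$ with quotient $^*$-homomorphism $\pi$. Since $\omega$ is a state and $N_\omega$ is an ideal, Cauchy--Schwarz gives $\omega(N_\omega)=0$, so $\omega$ descends to a faithful state $\tilde\omega$ on $\B$. The crucial step is to show that the coproduct descends, i.e.\ that $\Com(N_\omega)\sub\ker(\pi\ot\pi)$, producing $\Com_\B\col\B\to M(\B\ot\B)$ with $(\pi\ot\pi)\Com = \Com_\B\pi$. Once this is in place, the cancellation laws \eqref{cancellation} pass to $(\B,\Com_\B)$, and the faithful idempotent state $\tilde\omega\conv\tilde\omega = \tilde\omega$ is by standard quantum group theory the Haar state of a compact quantum group $\H$; this realises $\pi$ as $\pi_\H$ and gives $\omega = h_\H\circ\pi_\H$.

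The main obstacle is the descent of $\Com$. Idempotency of $\omega$ yields
\[
(\omega\ot\omega)\bigl(\Com(a)^*\Com(a)\bigr) = (\omega\conv\omega)(a^*a) = \omega(a^*a) = 0 \qquad (a\in N_\omega),
\]
so $\Com(a)\in N_{\omega\ot\omega}$, but a priori this left ideal may be strictly larger than $\ker(\pi\ot\pi) = N_\omega\ot\C_0(\G) + \C_0(\G)\ot N_\omega$. To bridge the gap I would pass to the enveloping von Neumann algebra $\C_0(\G)^{**}$: the hypothesis that $N_\omega$ is two-sided forces the support projection $s$ of the normal extension of $\omega$ to be central in $\C_0(\G)^{**}$, so $N_\omega = \C_0(\G)\cap\C_0(\G)^{**}(1-s)$. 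Applying the slice maps $\id\ot\omega$ and $\omega\ot\id$ to $\Com(a)$, and exploiting the centrality of $s$ together with the intertwining properties of $\Com$, one shows that $\Com(a)$ lies in the closed span $\C_0(\G)^{**}(1-s)\vnot\C_0(\G)^{**} + \C_0(\G)^{**}\vnot\C_0(\G)^{**}(1-s)$, which intersected with $\C_0(\G)\ot\C_0(\G)$ is exactly $\ker(\pi\ot\pi)$. This is the technical heart of the proof and the place where the full strength of the ideal hypothesis is used.
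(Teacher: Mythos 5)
Your handling of the internal equivalence in (ii) and of (i)$\Rightarrow$(ii) is fine, and the strategy for (ii)$\Rightarrow$(i) --- descend $\omega$ and $\Com$ to the quotient $\C_0(\G)/N_\omega$ and recognise a compact quantum group there --- is viable. The descent of the coproduct is correctly handled by the centrality of the support projection $s$ of $\omega$ in $\C_0(\G)^{**}$: from $(\omega\ot\omega)(\Com(a^*a))=\omega(a^*a)=0$ and the fact that the support of $\omega\ot\omega$ is the central projection $s\ot s$ one gets $\Com(a)(s\ot s)=0$, hence $(\pi\ot\pi)(\Com(a))=0$; note that you never actually need the exactness-sensitive identification $\ker(\pi\ot\pi)=N_\omega\ot\C_0(\G)+\C_0(\G)\ot N_\omega$, only the inclusion of $\Com(N_\omega)$ in $\ker(\pi\ot\pi)$.

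The genuine gap is the phrase ``the faithful idempotent state $\tilde\omega$ is by standard quantum group theory the Haar state of a compact quantum group''. Before Woronowicz's theory can be invoked you must know that $\C_0(\G)/N_\omega$ is \emph{unital}; this is exactly the quantum analogue of the Kawada--It\^{o} fact that the support of an idempotent probability measure is compact, which is the substantive content of the classical theorem and cannot be absorbed into ``standard theory'' (faithful idempotent states exist on non-unital $C^*$-bialgebras a priori, and without unitality there is no Haar state to compare with). The gap is fillable: by Lemma 2.5 of \cite{salmi-skalski:idem} the image of $\lt_\omega$ lies in the multiplicative domain of $\omega$, so $\omega\bigl(\lt_\omega(a)^*\lt_\omega(a)\bigr)=|\omega(\lt_\omega(a))|^2=|\omega(a)|^2$, whence $\bigl(\lt_\omega(a)-\omega(a)1\bigr)s=0$; since $\lt_\omega(a)\in\C_0(\G)$, this places $\omega(a)s$ in $\C_0(\G)s\cong\C_0(\G)/N_\omega$, and choosing $a$ with $\omega(a)\ne 0$ exhibits $s$ as a unit for the quotient. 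With unitality in hand, cancellation passes to the quotient, Woronowicz supplies a Haar state, and the Ces\`aro-mean argument identifies it with the faithful idempotent $\tilde\omega$, completing your proof. Be aware that this paper does not prove the theorem itself but imports it from \cite{salmi-skalski:idem}, where the argument runs instead through the correspondence (via \cite{S}) between compact quantum subgroups and symmetric, invariant, expected $C^*$-subalgebras --- so your route, once the unitality step is supplied, is a genuinely more direct alternative.
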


\section{General form of contractive idempotent functionals}

In this section we describe the general form of contractive
idempotents in $\M(\G)$. Note that a contractive idempotent must
necessarily have norm $1$. We begin by showing how one can construct
them using idempotent states and elements in $\C_b(\G)$
satisfying a version of the group-like property.

\begin{propn} \label{prop:construct}
Suppose that $\sigma$ is an idempotent state in $\M(\G)$,
and $u\in \C_b(\G)$ satisfies
\[
\cop(u)-u\ot u\in N_{\sigma\ot\sigma} \sub M(\C_0(\G)\ot \C_0(\G)).
\]
Then either $\sigma(u^*u) = 1$ or $\sigma(u^*u) = 0$.
In the former case $u.\sigma.u^*$ is an idempotent state and
$u.\sigma$ and  $\sigma.u^*$ are contractive idempotents.
\end{propn}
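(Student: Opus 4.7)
Set $v = \cop(u) - u\ot u \in M(\C_0(\G)\ot \C_0(\G))$, so the hypothesis reads $(\sigma\ot\sigma)(v^*v)=0$. The engine behind the whole argument is the following elementary consequence of Cauchy--Schwarz for the positive functional $\sigma\ot\sigma$:
\[
(\sigma\ot\sigma)(a v) \;=\; 0 \;=\; (\sigma\ot\sigma)(v^* a) \qquad (a \in M(\C_0(\G)\ot \C_0(\G))).
\]
Thus inside expectations of the form $(\sigma\ot\sigma)(\,\cdots\,)$ we may freely substitute $u\ot u$ for $\cop(u)$ (or conversely), provided the discrepancy $v$ ends up adjacent to the outer functional on the correct side.

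To obtain the dichotomy, I would expand
\[
\sigma(u^*u) \;=\; (\sigma\conv\sigma)(u^*u) \;=\; (\sigma\ot\sigma)\bigl(\cop(u)^*\cop(u)\bigr)
\]
and substitute $\cop(u) = u\ot u + v$. Three of the four resulting cross terms vanish by the observation above, while $(\sigma\ot\sigma)(v^*v)=0$ kills the remaining one; what survives is $(\sigma\ot\sigma)((u^*u)\ot(u^*u)) = \sigma(u^*u)^2$. Hence $\sigma(u^*u)$ is a real root of $t = t^2$ and lies in $\{0,1\}$.

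Assume now $\sigma(u^*u)=1$. For the idempotence of $u.\sigma$ I would expand
\[
(u.\sigma\conv u.\sigma)(y) \;=\; (\sigma\ot\sigma)\bigl(\cop(y)(u\ot u)\bigr),
\]
substitute $u\ot u = \cop(u) - v$, kill the $v$-term by the key observation, and read off $(\sigma\conv\sigma)(yu) = \sigma(yu) = u.\sigma(y)$. Contractivity is immediate from $|u.\sigma(y)|^2 = |\sigma(yu)|^2 \le \sigma(yy^*)\sigma(u^*u) \le \|y\|^2$, together with the Banach-algebra fact that a nonzero idempotent has norm at least one. The case of $\sigma.u^*$ is entirely symmetric (use $u^*\ot u^* = \cop(u^*) - v^*$). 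Positivity of $u.\sigma.u^*$ is automatic from $u.\sigma.u^*(y^*y) = \sigma((yu)^*(yu)) \ge 0$, and its norm equals $\sigma(u^*u) = 1$, so the only nontrivial claim left is that it is idempotent.

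The single step requiring slightly more care is the idempotence of $u.\sigma.u^*$: expanding
\[
(u.\sigma.u^*\conv u.\sigma.u^*)(y) \;=\; (\sigma\ot\sigma)\bigl((u^*\ot u^*)\cop(y)(u\ot u)\bigr)
\]
and substituting for both $u\ot u$ and $u^*\ot u^*$ produces a ``doubly perturbed'' term $(\sigma\ot\sigma)(v^*\cop(y) v)$ carrying $v$ on each side. This is the only place where one might worry the Cauchy--Schwarz trick is insufficient, but applying the general identity above with $a = \cop(y) v$ gives $(\sigma\ot\sigma)(v^* a) = 0$ at once. The three remaining cross terms collapse by the same bookkeeping, leaving $(\sigma\ot\sigma)\cop(u^*yu) = \sigma(u^*yu) = u.\sigma.u^*(y)$, and the proof is complete.
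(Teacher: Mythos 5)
Your proposal is correct and follows essentially the same route as the paper: the paper's proof is exactly the Cauchy--Schwarz observation that $(\sigma\ot\sigma)(X(u\ot u))=(\sigma\ot\sigma)(X\cop(u))$ and $(\sigma\ot\sigma)((u^*\ot u^*)X)=(\sigma\ot\sigma)(\cop(u^*)X)$, applied to get the dichotomy and the idempotence, with the remaining cases left as ``similar''. The only blemish is a harmless miscount in the dichotomy step (the expansion of $\cop(u)^*\cop(u)$ yields one main term, two cross terms killed by the key observation, and the $v^*v$ term killed by hypothesis), which does not affect the argument.
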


\begin{proof}
The Cauchy--Schwarz inequality implies that
for every $X$ in $M(\C_0(\G)\ot \C_0(\G))$
\[
(\sigma\ot\sigma)((u^*\ot u^*)X)
= (\sigma\ot\sigma)(\cop(u^*)X)
\quad\text{and}\quad
(\sigma\ot\sigma)(X(u\ot u))
= (\sigma\ot\sigma)(X\cop(u)).
\]
In particular, using the assumption that $\sigma$ is idempotent,
we see that $\sigma(u^*u)^2 = \sigma(u^*u)$,
so $\sigma(u^*u)$ is either $1$ or $0$.
(Note that if $\sigma(u^*u) = 0$, then
$u.\sigma = \sigma.u^* = u.\sigma.u^* = 0$.)

Moreover,
\[
(u.\sigma\conv u.\sigma)(a)
=(\sigma\ot\sigma)\bigl(\cop(a)(u\ot u)\bigr)
= (\sigma\ot\sigma)(\cop(au))
= u.\sigma(a)
\]
for every $a$ in $\C_0(\G)$. Therefore $u.\sigma$ is a contractive
idempotent when $\sigma(u^*u)\neq 0$. The other cases are similar.
\end{proof}

We will now show that in fact all contractive idempotents in $\M(\G)$
have the form described above. We begin with the following lemma, which
is a generalisation to quantum groups of Theorem~2.1.2 in
\cite{greenleaf:homo}. Recall the polar decomposition of functionals
described in Subsection \ref{C*notations}.

\begin{lemma} \label{lemma}
Let $\omega_1$, $\omega_2 \in \M(\G)$ be
such that $\|\omega_1\conv\omega_2\| = \|\omega_1\| \|\omega_2\|$.
Then $|\omega_1\conv\omega_2|_r = |\omega_1|_r \conv |\omega_2|_r$.
\end{lemma}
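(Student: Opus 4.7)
Denote $\omega := \omega_1\conv\omega_2$, and write the polar decompositions $\omega_i = u_i.|\omega_i|_r$ with $s_i := u_i^*u_i$ the support of $|\omega_i|_r$, and $\omega = v.|\omega|_r$ with support $v^*v$. Set $\rho := |\omega_1|_r\ot|\omega_2|_r$, viewed as a positive functional on $\C_0(\G)\ot\C_0(\G)$ (and normally extended to the bidual), and $\tau := \rho\circ\Delta = |\omega_1|_r\conv|\omega_2|_r$. The plan is to prove $|\omega|_r = \tau$ directly, by exploiting the representation
\[
\omega(a) = (\omega_1\ot\omega_2)(\Delta(a)) = \rho\bigl(\Delta(a)(u_1\ot u_2)\bigr),
\]
which follows from $(\omega_1\ot\omega_2)(X) = \rho(X(u_1\ot u_2))$.

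The key step is Cauchy--Schwarz for $\rho$ applied to the pair $\bigl(\Delta(v^*),\,u_1\ot u_2\bigr)$. Using $\rho(\Delta(v^*)(u_1\ot u_2)) = \omega(v^*) = |\omega|_r(v^*v) = \|\omega\|$ and $\rho(s_1\ot s_2) = \|\omega_1\|\|\omega_2\|$, one obtains
\[
\|\omega\|^2 \leq \rho(\Delta(v^*v))\,\|\omega_1\|\|\omega_2\| = \tau(v^*v)\,\|\omega_1\|\|\omega_2\|.
\]
The hypothesis together with $\tau(v^*v)\leq \tau(1) = \|\omega_1\|\|\omega_2\|$ forces both $\tau(v^*v) = \tau(1)$ and equality in the Cauchy--Schwarz inequality. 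From the former, Cauchy--Schwarz for $\tau$ (applied to the projection $1-v^*v$) gives $\tau(av^*v) = \tau(a)$ for every $a$ $(\star)$. The latter says that $\Delta(v)$ and $u_1\ot u_2$ have parallel images in the GNS space of $\rho$; computing the proportionality constant from the two inner products identifies it as $1$, i.e.\ $\Delta(v) - u_1\ot u_2 \in N_\rho := \{x:\rho(x^*x)=0\}$.

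Let $p$ denote the support projection of $\rho$ in the bidual. Since $\rho(\,\cdot\,) = \rho(p\,\cdot\,p)$ and $\rho$ is faithful on $p\,(\C_0(\G)\ot\C_0(\G))^{**}\,p$, the relation $\rho(X^*X) = 0$ for $X := \Delta(v) - u_1\ot u_2$ gives $Xp = 0$, i.e.\ $\Delta(v)p = (u_1\ot u_2)p$, and hence $\Delta(v^*)(u_1\ot u_2)p = \Delta(v^*v)p$. Combining this identity with $\rho(Y) = \rho(Yp)$ and the formula $|\omega|_r(a) = \omega(av^*)$, I compute
\[
|\omega|_r(a) = \rho\bigl(\Delta(a)\Delta(v^*)(u_1\ot u_2)\bigr) = \rho\bigl(\Delta(av^*v)\,p\bigr) = \tau(av^*v) \stackrel{(\star)}{=} \tau(a),
\]
which yields the desired equality $|\omega_1\conv\omega_2|_r = |\omega_1|_r\conv|\omega_2|_r$. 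The main obstacle is extracting the bidual identity $\Delta(v)p = (u_1\ot u_2)p$ from the equality case of Cauchy--Schwarz; the remaining computations are routine bookkeeping with support projections and the property $|\omega|_r(y\,v^*v) = |\omega|_r(y)$.
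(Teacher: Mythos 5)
Your argument is correct. You and the paper start from the same place --- the representation $(\omega_1\conv\omega_2)(x)=(|\omega_1|_r\ot|\omega_2|_r)\bigl(\cop(x)(u_1\ot u_2)\bigr)$ and a Cauchy--Schwarz estimate with respect to $\rho=|\omega_1|_r\ot|\omega_2|_r$ --- but you finish differently. The paper applies Cauchy--Schwarz to the pair $\bigl(\cop(x^*),\,\cop(u_{12}^*)(u_1\ot u_2)\bigr)$ for \emph{every} $x$, obtains the domination $\bigl||\omega_1\conv\omega_2|_r(x)\bigr|^2\le\|\omega_1\conv\omega_2\|\,(|\omega_1|_r\conv|\omega_2|_r)(xx^*)$, and then simply invokes the uniqueness characterisation of the absolute value (Proposition 4.6 of \cite{takesaki:vol1}) to conclude. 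You instead apply Cauchy--Schwarz to the single pair $\bigl(\cop(v^*),\,u_1\ot u_2\bigr)$, use the norm hypothesis to force equality, extract the bidual identity $\cop(v)p=(u_1\ot u_2)p$ from the equality case via the support projection $p$ of $\rho$, and then compute $|\omega|_r=\tau$ directly. All your individual steps check out: $\rho(X^*X)=0$ does give $Xp=0$ because $\rho(\,\cdot\,)=\rho(p\,\cdot\,p)$ and $\rho$ is faithful on the corner, and $(\star)$ follows from $\tau(1-v^*v)=0$ by Cauchy--Schwarz since $v^*v$ is a projection. The trade-off is that your route is self-contained --- it effectively re-proves the relevant instance of Takesaki's uniqueness proposition --- at the cost of extra bookkeeping with support projections in $(\C_0(\G)\ot\C_0(\G))^{**}$, whereas the paper's version is shorter precisely because it outsources that bookkeeping to the cited result; as a small bonus your argument also yields the structural identity $\cop(u_{12})p=(u_1\ot u_2)p$ relating the three partial isometries, which the paper's proof does not make explicit.
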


\begin{proof}
Let $u_1$, $u_2$ and $u_{12}$ denote the partial isometries in $\C_0(\G)^{**}$
associated with the polar decompositions
of $\omega_1$, $\omega_2$ and $\omega_1\conv\omega_2$,
respectively. Then, for every $x$ in $\C_0(\G)^{**}$,
\begin{align*}
|\omega_1\conv\omega_2|_r(x) &= \omega_1\conv\omega_2(xu_{12}^*)
= (\omega_1\ot\omega_2)(\cop(x u_{12}^*)) \\
&= (|\omega_1|_r\ot|\omega_2|_r)(\cop(x) \cop(u_{12}^*)(u_1\ot u_2)).
\end{align*}
By the Cauchy--Schwarz inequality,
\begin{align*}
\bigl||\omega_1\conv\omega_2|_r(x)\bigr|^2
&\le
(|\omega_1|_r\ot|\omega_2|_r)(\cop(xx^*))
(|\omega_1|_r\ot|\omega_2|_r)((u_1^*\ot u_2^*)\cop(u_{12}u_{12}^*)(u_1\ot u_2))\\
&\le
(|\omega_1|_r\conv|\omega_2|_r)(xx^*)
(|\omega_1|_r\ot|\omega_2|_r)(u_1^*u_1\ot u_2^*u_2)\\
&= \|\omega_1\|\, \|\omega_2\| (|\omega_1|_r\conv|\omega_2|_r)(xx^*)
= \| \omega_1 \conv \omega_2\| (|\omega_1|_r\conv|\omega_2|_r)(xx^*)
\end{align*}
(the last identity is due to the hypothesis).
It then follows from the uniqueness of
the absolute value
$|\omega_1\conv\omega_2|_r$ (Proposition 4.6 of \cite{takesaki:vol1})
that
\[
|\omega_1\conv\omega_2|_r = |\omega_1|_r \conv|\omega_2|_r .
\]
\end{proof}

The next theorem is the main result of this section.
Comparing the theorem with Proposition~\ref{prop:construct}
shows that we have a characterisation of contractive idempotents.
We shall use the notation set up in the theorem for the
rest of the paper; that is, given a contractive idempotent
$\omega$, we let $v$ denote an element satisfying the following theorem.

\begin{theorem} \label{thm:1st-fact}
Let $\omega$ be a contractive idempotent in $\M(\G)$.
Then the absolute values $|\omega|_r$ and $|\omega|_l$ are idempotent
states and there exists $v\in \C_0(\G)$ such that
$\omega =|\omega|_l.v = v.|\omega|_r $ and
\[
\cop(v)-v\ot v\in N_{\rom\ot \rom} \cap (N_{\lom\ot \lom})^*.
\]
\end{theorem}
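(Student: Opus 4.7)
The plan is to proceed in three stages: first show that $|\omega|_r$ and $|\omega|_l$ are themselves idempotent states, then extract the group-like relation for the polar partial isometry by exploiting the equality case of the Cauchy--Schwarz estimate in the proof of Lemma~\ref{lemma}, and finally lift that partial isometry from $\C_0(\G)^{**}$ to an element $v\in\C_0(\G)$.

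Since $\omega$ is a contractive idempotent, $\|\omega\conv\omega\| = 1 = \|\omega\|\,\|\omega\|$, so applying Lemma~\ref{lemma} with $\omega_1=\omega_2=\omega$ yields $|\omega|_r = |\omega|_r \conv |\omega|_r$; being positive with $\||\omega|_r\|=1$, it is an idempotent state. A parallel argument on the other side (either running the Cauchy--Schwarz computation of Lemma~\ref{lemma} with the roles of the absolute values reversed, or passing to the opposite quantum group) gives the same conclusion for $|\omega|_l$. To find the group-like element, let $u\in\C_0(\G)^{**}$ be the partial isometry from $\omega = u.|\omega|_r = |\omega|_l.u$, with supports $s=u^*u$ and $s_l=uu^*$. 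Setting $x=1$ in the identity
\[
|\omega\conv\omega|_r(x) = (|\omega|_r\ot|\omega|_r)\bigl(\cop(x)\cop(u^*)(u\ot u)\bigr)
\]
from the proof of Lemma~\ref{lemma} gives $(|\omega|_r\ot|\omega|_r)(\cop(u^*)(u\ot u)) = 1$, and taking the complex conjugate produces $(|\omega|_r\ot|\omega|_r)((u^*\ot u^*)\cop(u)) = 1$. Combined with $(|\omega|_r\conv|\omega|_r)(s) = 1 = |\omega|_r(s)^2$, expanding
\[
(|\omega|_r\ot|\omega|_r)\bigl((\cop(u)-u\ot u)^*(\cop(u)-u\ot u)\bigr) = 1-1-1+1 = 0
\]
places $\cop(u)-u\ot u$ in $N_{|\omega|_r\ot|\omega|_r}$. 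The mirror computation against $|\omega|_l\ot|\omega|_l$, applied instead to $(\cop(u)-u\ot u)(\cop(u)-u\ot u)^*$, gives $(\cop(u)-u\ot u)^*\in N_{|\omega|_l\ot|\omega|_l}$.

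The remaining step, and the main obstacle, is to replace the abstract partial isometry $u\in\C_0(\G)^{**}$ by a genuine $v\in\C_0(\G)$ with $vs = u = s_l v$; once this is done, $\omega = v.|\omega|_r = |\omega|_l.v$ holds, and the group-like relation transfers from $u$ to $v$ because $(v - u)s = 0 = s_l(v - u)$ forces the difference to be annihilated after tensoring and applying $|\omega|_r\ot|\omega|_r$ or $|\omega|_l\ot|\omega|_l$. I would attempt this lifting via the coamenability hypothesis, which makes $\C_0(\G)$ coincide with its universal version and supplies a counit, together with the idempotency of $|\omega|_r$ and $|\omega|_l$ — which endows $s$ and $s_l$ with the structure of group-like projections in $\C_0(\G)^{**}$ — and a density/factorisation argument (e.g.\ Kaplansky density together with Cohen-type factorisation applied to the GNS modules of $|\omega|_r$ and $|\omega|_l$) to realise $u$ as a bounded continuous quantum function.
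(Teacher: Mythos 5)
Your first two stages are sound and essentially match the paper's computations: Lemma~\ref{lemma} applied with $\omega_1=\omega_2=\omega$ gives that $|\omega|_r$ is an idempotent state, and the expansion of $(|\omega|_r\ot|\omega|_r)\bigl((\cop(u)-u\ot u)^*(\cop(u)-u\ot u)\bigr)=1-1-1+1=0$ is exactly the Cauchy--Schwarz equality-case argument the paper runs (the paper runs it for $v$ rather than $u$, but the arithmetic is identical). Your route to the idempotency of $|\omega|_l$ (a left-handed version of Lemma~\ref{lemma}, or the opposite quantum group) is a legitimate variant of the paper's shortcut, which instead deduces it from $v.|\omega|_r.v^*=u.|\omega|_r.u^*=|\omega|_l$ and Proposition~\ref{prop:construct} once $v$ is in hand.

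The genuine gap is the third stage, which you yourself flag as ``the main obstacle'': producing $v\in\C_0(\G)$ with $vu^*u=u$ (equivalently $|\omega|_r\bigl((v-u)^*(v-u)\bigr)=0$). The tools you propose do not deliver this. Kaplansky density only gives a net $v_i\in\ball(\C_0(\G))$ with $|\omega|_r\bigl((v_i-u)^*(v_i-u)\bigr)\to 0$, never exact vanishing for a fixed $i$; Cohen factorisation factors elements of essential modules but gives no control forcing the factor to implement the partial isometry $u$ on the support projection; and coamenability/the counit plays no role in the paper's argument at this point. The step where idempotency of $\omega$ (not just of $|\omega|_r$) must be used in an essential way is precisely here, and the paper's device is short but nonobvious: pick $a$ with $\|\lt_\omega(a)\|=1$ and $\nu$ with $\|\nu\|\le 1$, $\nu(\lt_\omega(a))=1$, and set $v=\rt_\nu(\lt_\omega(a))^*\in\C_0(\G)$. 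Then $\omega\conv\omega=\omega$ gives $\omega(v^*)=\omega\bigl(\rt_\nu(\lt_\omega(a))\bigr)=\nu(\lt_\omega(a))=1$, so $\|v\|=1$ and
\[
|\omega|_r\bigl((v-u)^*(v-u)\bigr)=|\omega|_r(v^*v)-\omega(v^*)-\conj{\omega(v^*)}+|\omega|_r(u^*u)\le 1-1-1+1=0,
\]
which is the exact identity your approximation scheme cannot reach. Once this $v$ exists, your observation that the group-like relation transfers from $u$ to $v$ is correct (and is in effect what the paper verifies by computing directly with $v$). So the proposal correctly identifies where the difficulty lies but does not overcome it; the construction of $v$ is the one idea missing from your argument.
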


\begin{proof}
Note that $\|\omega\| = 1$. By Lemma~\ref{lemma}
\[
|\omega|_r\conv|\omega|_r = |\omega|_r
\]
so $|\omega|_r$ is an idempotent state.

Let $u\in \C_0(\G)^{**}$ be the partial isometry associated with the polar decomposition
of $\omega$.
Fix $a\in\C_0(\G)$ such that $\|\lt_\omega(a)\| = 1$
(such $a$ exists because otherwise $\omega=0$).
Choose $\nu\in\C_0(\G)^*$ such that $\|\nu\|\le 1$
and $\nu(\lt_\omega(a))= 1$.
Put $v = \rt_\nu(\lt_\omega(a))^*$, so $\|v\|\leq 1$.
Since $\omega$ is an idempotent,
\[
1 = \nu\bigl(\lt_\omega(a))\bigr) = \omega\bigl(\rt_\nu(\lt_\omega(a))\bigr)
= \omega(v^*).
\]
Hence $\|v\| = 1$. Moreover,
\begin{align*}
|\omega|_r \bigl((v-u)^*(v-u)\bigr) &=
|\omega|_r (v^*v) - |\omega|_r (v^*u) - |\omega|_r (u^*v) +  |\omega|_r (u^*u)\\
&=|\omega|_r (v^*v) - \omega(v^*) - \conj{\omega (v^*)} +  |\omega|_r (u^*u)
\le   \| v\|^2 - 1 - 1 + 1 = 0.
\end{align*}
It follows that $\omega = v.|\omega|_r$ and $|\omega|_r (v^*v) = 1$.

Now
\begin{align*}
&(|\omega|_r\ot|\omega|_r)
   \bigl((\cop(v)-v\ot v)^*(\cop(v)-v\ot v)\bigr) \\
&\qquad= (|\omega|_r\ot|\omega|_r)\bigl(\cop(v^*v)\bigr)
-(|\omega|_r\ot|\omega|_r)\bigl(\cop(v^*)(v\ot v)\bigr)\\
&\qquad\quad-(|\omega|_r\ot|\omega|_r)\bigl((v^*\ot v^*)\cop(v)\bigr)
+(|\omega|_r\ot|\omega|_r)(v^* v\ot v^* v)\\
&\qquad=|\omega|_r(v^*v)
-(\omega\ot\omega)(\cop(v^*))
-\conj{(\omega\ot\omega)(\cop(v^*))}
+|\omega|_r(v^* v)^2\\
&\qquad= 2 - \omega(v^*)-\conj{\omega(v^*)} = 0.
\end{align*}
Hence $\cop(v)-v\ot v\in N_{\rom\ot\rom}$.

Finally, $v.|\omega|_r.v^* = u.|\omega|_r.u^* = |\omega|_l$,
so also  $|\omega|_l$ is an idempotent state
by Proposition~\ref{prop:construct}.
Using similar arguments as those applied to $|\omega|_r$,
we obtain that $\omega = |\omega|_l.v$
and, moreover, $\cop(v)-v\ot v\in (N_{\lom\ot \lom})^*$.
\end{proof}

\begin{remark}
We know from the classical case due to Greenleaf \cite{greenleaf:homo}
that every contractive idempotent measure on a locally compact group $G$
is of the form $d\mu(s) = \chi(s)\,dm_H(s)$ where
$m_H$ is the Haar measure of a compact subgroup
$H$ of $G$ and $\chi$ is a continuous character of $H$.
This characterisation follows also from Theorem~\ref{thm:1st-fact}
(see Proposition~\ref{groupcontidemp} below);
$|\omega|_r = m_H$ and
the character $\chi$ is just the restriction of $v\in\C_0(G)$
to $H$.

Now consider the dual case when
$\C_0(\G)=\C^*(\Gamma)$ for a locally compact amenable group $\Gamma$
(sometimes one writes then $\G=\hat{\Gamma}$).
Ilie and Spronk \cite[Theorem 2.1]{IS1}
showed that a contractive idempotent
in the Fourier--Stieltjes algebra $B(\Gamma) \cong \C^*(\Gamma)^*$
is a characteristic function $1_C$ of an open coset $C$ of $\Gamma$
(even when $\Gamma$ is not amenable).
The most natural factorisation of $1_C$ is certainly
$1_C = 1_H(\cdot s) = \lambda(s).1_H$ where
$H$ is an open subgroup of $\Gamma$ with $C = H s\inv$.
However, $\lambda(s)$ is in the multiplier algebra $M(\C^*(\Gamma))$
and not in general in $\C^*(\Gamma)$.
A factorisation that does fit into the description of
Theorem~\ref{thm:1st-fact} is given by
$1_C =  \lambda(f).1_H$ where $f\in\lone(\Gamma)$ is
such that $\supp f\sub s H$ and $\int_G f(t)\,dt = 1$.
We note that also the description of contractive idempotents
in $B(\Gamma)$ (for amenable $\Gamma$) as characteristic functions
of open cosets follows from our results; see Proposition~\ref{dual}.
\end{remark}

\begin{theorem} \label{thm:cts-fact}
Let $\omega$ be a contractive idempotent in $\M(\G)$.
Suppose that $|\omega|_r$ is a Haar idempotent
and let $(\H,\pi)$ be the associated compact quantum subgroup
\textup{(}so that $\pi: \C_0(\G)\to\C(\H)$ is the morphism\textup{)}.
Then there exists a group-like unitary $u \in \C(\H)$
such that
\[ \omega = h_{\H} (\pi(\cdot) u).\]
Moreover,  $|\omega|_r = |\omega|_l$.
\end{theorem}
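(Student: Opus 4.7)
The strategy is to apply Theorem~\ref{thm:1st-fact} to obtain $v \in \C_0(\G)$ with $\omega = v.\rom$, $\lom = v.\rom.v^*$, $\rom(v^*v) = 1$, and $\cop(v) - v\ot v \in N_{\rom\ot\rom}$, then set $u := \pi(v) \in \C(\H)$ and verify that $u$ is the desired group-like unitary. Since $\rom = h_\H\circ\pi$ and the Haar state of (the reduced form of) $\H$ is faithful, the tensor state $h_\H\ot h_\H$ is faithful on the spatial tensor product $\C(\H)\ot\C(\H)$, and so $N_{\rom\ot\rom} = \ker(\pi\ot\pi)$. Applying $\pi\ot\pi$ to the relation $\cop(v) - v\ot v \in N_{\rom\ot\rom}$ and using the intertwining property $(\pi\ot\pi)\circ\cop = \cop_\H\circ\pi$ then yields $\cop_\H(u) = u\ot u$, so $u$ is group-like in $\C(\H)$.

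Next I would establish unitarity. The identity $\rom(v^*v) = 1$ translates to $h_\H(u^*u) = 1$, and together with $\|u\| \le \|v\| = 1$ and faithfulness of $h_\H$ this forces $u^*u = 1$. The projection $p := uu^*$ inherits from $u$ the group-like property $\cop_\H(p) = p\ot p$, so right invariance of the Haar state yields
\[
h_\H(p)\cdot 1 = (h_\H\ot\id)\cop_\H(p) = (h_\H\ot\id)(p\ot p) = h_\H(p)\,p.
\]
Hence $h_\H(p)(1-p) = 0$. Since $u^*u = 1$ implies $p = uu^* \ne 0$, faithfulness of $h_\H$ forces $h_\H(p) > 0$, and therefore $p = 1$. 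The main obstacle of the proof lies precisely in this passage from isometry to unitarity for a group-like element; I would handle it cleanly via the group-like projection $uu^*$, as above.

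The formula for $\omega$ is then immediate from $\omega = v.\rom$: for every $a\in\C_0(\G)$ one has $\omega(a) = \rom(av) = h_\H(\pi(a)u)$. For the identity $\rom = \lom$, the formula $\lom = v.\rom.v^*$ yields $\lom = \tau\circ\pi$, where $\tau(b) := h_\H(u^*bu)$ is a state on $\C(\H)$; by uniqueness of the Haar state on a compact quantum group, it suffices to show that $\tau$ is left invariant. Using $\cop_\H(u^*bu) = (u^*\ot u^*)\cop_\H(b)(u\ot u)$, pulling the outer $u^*$, $u$ out of the first tensor leg, and applying left invariance of $h_\H$ in the second leg gives $u^*(\id\ot\tau)\cop_\H(b)\,u = \tau(b)\cdot 1$; conjugating by $u$ and invoking the unitarity established above yields $(\id\ot\tau)\cop_\H(b) = \tau(b)\cdot 1$, completing the proof.
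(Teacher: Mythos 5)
Your proof is correct and follows essentially the same route as the paper: set $u=\pi(v)$ with $v$ from Theorem~\ref{thm:1st-fact}, use faithfulness of $h_\H\ot h_\H$ to get $N_{\rom\ot\rom}=\ker(\pi\ot\pi)$ and hence $\cop_\H(u)=u\ot u$, deduce unitarity from the group-like relation and faithfulness of $h_\H$, and identify $h_\H(u^*\,\cdot\,u)$ with $h_\H$ to get $\lom=\rom$. The only (harmless) variation is in the last step, where you verify left invariance of $\tau$ directly and invoke uniqueness of the Haar state, whereas the paper observes that $u.h_\H.u^*$ is a faithful idempotent state and cites Lemma 2.1 of \cite{woronowicz98}.
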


\begin{proof}
Let $v$ be as in Theorem \ref{thm:1st-fact} and write $u = \pi(v)$.
Let $h_\H$ be the Haar state of $\H$ so that
$|\omega|_r = h_\H\circ\pi$.
Since  $N_{\rom\ot\rom} = \ker \pi\ot\pi$
(because $\rom\ot\rom = (h_\H\ot h_\H)\circ(\pi\ot\pi)$ and
$h_\H\ot h_\H$ is faithful),
we have
\[
\cop_\H(u) = u\ot u
\]
by Theorem~\ref{thm:1st-fact}. Then
\[
h_\H(uu^* )1 =
(h_\H\ot\id)\cop_\H(uu^*) =
h_\H(uu^*)uu^*.
\]
Since $h_\H$ is faithful and $u\ne 0$
(because $\omega \ne 0$),  it follows that $uu^* = 1$.
Similarly $u^*u = 1$,
so $u$ is a group-like unitary in $\C(\H)$.

It remains to prove that $\lom =\rom$.
Consider the state $h':= u.h_{\H}.u^*\in \C(\H)^*$.
It is easy to check that $h'$ is an idempotent state
(see Proposition~\ref{prop:construct}).
As it is faithful on $\C(\H)$ it must be in fact equal to $h_{\H}$ (see Lemma 2.1 of \cite{woronowicz98}).
Let then $a\in \C_0(\G)$ and compute:
\begin{align*}
\lom(a) &= \rom(v^*av) = h_{\H} (\pi (v^*av))
         = h_{\H} (u^* \pi(a) u) \\
        &= h'(\pi(a)) =  h_{\H}(\pi(a)) =  \rom(a).
\end{align*}
\end{proof}

\begin{remark}
Naturally the above theorem has also a `left' version -- one can begin
by assuming that $\lom$ is a Haar idempotent (the latter is clearly
true if and only if $\rom$ is a Haar idempotent).
\end{remark}

\section{TROs associated with contractive idempotents} \label{TROSection}

In this section we show that to each contractive idempotent in
$\M(\G)$ one can associate in a natural way a ternary ring of
operators in $\C_0(\QG)$. We begin by introducing some definitions.

A \emph{ternary ring of operators} (\emph{TRO}) acting between Hilbert spaces
$\Hil$ and $\Kil$ is a closed subspace $T$ of $B(\Hil;\Kil)$ that
is closed under the ternary product:
\[
ab^*c \in T \qquad\text{whenever }a,b,c\in T.
\]
If $X\sub T$ is a sub-TRO (i.e.\ a closed subspace
that is closed under the ternary product), then a
contractive linear map $P$ from $T$ onto $X$ is called a
\emph{TRO conditional expectation} if
\[
P(ax^*y) = P(a)x^* y,\qquad
P(xa^*y) = xP(a)^* y,\qquad
P(xy^*a) = x y^*P(a),\qquad
\]
for every $a\in T$ and $x,y\in X$. TRO conditional expectations are automatically completely contractive (\cite{EOR}).

We shall show that the closed right invariant subspace
$\clg_\omega = \lt_\omega(\C_0(\G))$
associated to a contractive idempotent $\omega$ is
a sub-TRO of $\C_0(\G)$ and that $\lt_\omega: \C_0(\G) \to \clg_\omega$
is a TRO conditional expectation.

\begin{lemma} \label{lemma:lt_omega}
Let $\omega$ be a contractive idempotent in $\M(\G)$ and let  $a,b\in\C_0(\G)$. Then  
\begin{enumerate}
\item $\lt_\omega(\lt_\omega(a)b) = \lt_\omega(a)\lt_{|\omega|_\ell}(b)$;
\item $\lt_{|\omega|_\ell}(\lt_\omega(a)^*b) = \lt_\omega(a)^*\lt_{\omega}(b)$;
\item $\lt_\omega(a\lt_\omega(b)) = \lt_{|\omega|_r}(a)\lt_\omega(b)$;
\item $\lt_{|\omega|_r}(a\lt_\omega(b)^*) = \lt_\omega(a)\lt_\omega(b)^*$.
\end{enumerate}
\end{lemma}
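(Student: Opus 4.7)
The plan is to prove identity (1) in detail and then obtain (2)--(4) by analogous arguments. The strategy for (1) is to expand both sides in Sweedler notation, invoke the polar factorization $\omega = \lom . v = v . \rom$ from Theorem~\ref{thm:1st-fact}, exploit the near-group-likeness $\cop(v) - v\ot v \in N_{\rom\ot\rom} \cap (N_{\lom\ot\lom})^*$ to replace $v \ot v$ by $\cop(v)$ under an appropriate ambient idempotent state, and finally collapse using the following auxiliary identity.

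I would first record: for any idempotent state $\sigma$ on $\G$ and $c,y \in \CG$,
\[
(\sigma \ot \sigma)(\cop(c)(1\ot y)) = \sigma(c)\sigma(y)
\qquad\text{and}\qquad
(\sigma \ot \sigma)((1 \ot c)\cop(y)) = \sigma(c)\sigma(y).
\]
To see this, note that $\lt_\sigma$ is a unital completely positive idempotent and hence, by Tomiyama's theorem, a conditional expectation onto its range; in particular, $\lt_\sigma(\lt_\sigma(c)y) = \lt_\sigma(c)\lt_\sigma(y)$. Applying the counit $\Cou$ (available by coamenability) to both sides and using $\sigma = \Cou \circ \lt_\sigma$ reduces this at once to the scalar identity above.

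For (1), using Lemma~\ref{invar}, Sweedler expansion with $\cop^{(2)}(a) = a_{(1)} \ot a_{(2)} \ot a_{(3)}$ gives
\[
\lt_\omega(\lt_\omega(a)b) = \omega(a_{(1)})\omega(a_{(2)}b_{(1)})a_{(3)}b_{(2)}.
\]
Substituting $\omega(y) = \lom(vy)$ in both factors, and writing $\cop(a) = a' \ot a''$ so that $a_{(1)}\ot a_{(2)} = \cop(a')$ and $a_{(3)} = a''$, this rewrites as $(\lom \ot \lom)\bigl((v\ot v)\cop(a')(1 \ot b_{(1)})\bigr)\, a'' b_{(2)}$. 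Since $v \ot v - \cop(v) \in (N_{\lom \ot \lom})^*$, the Cauchy--Schwarz inequality allows me to replace $v \ot v$ by $\cop(v)$ on the left under $(\lom \ot \lom)$; multiplicativity of $\cop$ then collapses $\cop(v)\cop(a') = \cop(va')$. The auxiliary identity (with $\sigma = \lom$, $c = va'$, $y = b_{(1)}$) yields $\lom(va')\lom(b_{(1)})\, a'' b_{(2)} = \omega(a')\lom(b_{(1)})\, a'' b_{(2)} = \lt_\omega(a)\lt_{\lom}(b)$.

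Identity (3) follows by the mirror computation using the other factorization $\omega = v.\rom$ and the second form of the auxiliary identity. For (2) and (4) I would take adjoints of the near-group-like relation to obtain $\cop(v^*) - v^* \ot v^* \in N_{\lom \ot \lom} \cap (N_{\rom \ot \rom})^*$, and use $\lom = v.\rom.v^*$ together with the corresponding support-based relation $\rom = v^*.\lom.v$ (which holds because $v - u \in N_\rom$, where $u$ is the partial isometry from the polar decomposition) to interconvert between the two absolute values as the computation demands. The main obstacle is purely organisational: for each of (1)--(4) one must choose the correct polar factorization so that the term $v \ot v$ (or $v^* \ot v^*$) lands on the side of $(\lom \ot \lom)$ or $(\rom \ot \rom)$ where the Cauchy--Schwarz replacement is valid, and then group the Sweedler indices so that the final application of the auxiliary identity produces exactly the targeted right-hand side.
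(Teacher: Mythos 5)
Your argument is correct and follows essentially the same route as the paper's: factor $\omega$ through an absolute value via $v$, use Cauchy--Schwarz together with $\cop(v)-v\ot v\in N_{\rom\ot\rom}\cap(N_{\lom\ot\lom})^*$ to replace $v\ot v$ by $\cop(v)$, and conclude with the multiplicative-domain property of the idempotent state (your Tomiyama/counit derivation of the auxiliary identity is just a repackaging of Lemma~2.5 of \cite{salmi-skalski:idem}, which is what the paper cites at the corresponding step). The only difference is organisational: the paper proves (1) and (2) directly and deduces (3) and (4) by applying them to $\conj{\omega}$, whereas you prove (1) and (3) directly and sketch (2) and (4) via the adjointed relation and the interchange $\rom = v^*.\lom.v$; both bookkeeping schemes go through with the same tools.
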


\begin{proof}
Let $a,b \in \C_0(\G)$. Denote by $\Com^{(2)}:\C_0(\G) \to \M(\C_0(\G) \ot \C_0(\G) \ot \C_0(\G))$ the map given by $(\Com \ot \id)\circ\Com$ (equivalently  $(\id \ot \Com)\circ\Com$).  Then
\begin{align*}
\lt_\omega(\lt_\omega(a)b)
&= (\omega\ot\id)
  \bigl((\omega\ot\id\ot\id)(\cop^{(2)}(a)) \cop(b)\bigr)\\
&= (|\omega|_l\ot|\omega|_l\ot\id)
  \bigl((v\ot v\ot 1)(\cop^{(2)}(a))(1\ot \cop(b))\bigr)\\
&=  (|\omega|_l\ot|\omega|_l\ot\id)
  \bigl((\cop(v)\ot 1)(\cop^{(2)}(a))(1\ot \cop(b))\bigr)
\end{align*}
because $\cop(v)-v\ot v \in (N_{|\omega|_l\ot|\omega|_l})^*$.
Continuing the calculation we have
\begin{align*}
\lt_\omega\bigl(\lt_\omega(a)b\bigr)
&= (|\omega|_l\ot|\omega|_l\ot\id)
  \bigl((\cop\ot\id)((v\ot 1)\cop(a))(1\ot \cop(b))\bigr)\\
&= (|\omega|_l\ot\id)
  \bigl( (\lt_{|\omega|_l}\ot\id)((v\ot 1)\cop(a))\cop(b)\bigr)\\
&= (|\omega|_l\ot\id) \bigl((\lt_{|\omega|_l}\ot\id)((v\ot 1)\cop(a))\bigr)
  (|\omega|_l\ot\id)(\cop(b))
\end{align*}
because the multiplicative domain of the
idempotent state $|\omega|_l$ contains
$L_{|\omega|_l}(\C_0(\G))$ by Lemma 2.5 of \cite{salmi-skalski:idem}.
Rewriting the last expression, we obtain the first statement:
\[
\lt_\omega(\lt_\omega(a)b) =  \lt_\omega(a)\lt_{|\omega|_\ell}(b).
\]

Next we confirm the second statement,
using similar kind of manipulations as above:
\begin{align*}
&\lt_{|\omega|_l}(\lt_\omega(a)^*b)
= (|\omega|_l\ot\id)\bigl(\cop(b^*)\cop(\lt_\omega(a))\bigr)^*\\
&\qquad
= (\omega\ot\id)\bigl((v^*\ot 1)\cop(b^*)\cop(\lt_\omega(a))\bigr)^*\\
&\qquad= (|\omega|_r\ot\id)\bigl((v^*\ot 1)\cop(b^*)
                                 \cop(\lt_\omega(a))(v\ot 1)\bigr)^*\\
&\qquad= (|\omega|_r\ot|\omega|_r\ot\id)
\bigl((1\ot v^*\ot 1)(1\ot \cop(b^*))\cop^{(2)}(a)(v\ot v\ot 1)\bigr)^*\\
&\qquad= (|\omega|_r\ot\id)\bigl((v^*\ot 1)\cop(b^*)
         (\lt_{|\omega|_r}\ot\id)(\cop(a)(v\ot 1))\bigr)^*\\
&\qquad=
(|\omega|_r\ot\id)\bigl(\cop(a)(v\ot 1)\bigr)^*
(|\omega|_r\ot\id)\bigl((v^*\ot 1)\cop(b^*)\bigr)^*
         \\
&\qquad=\lt_\omega(a)^*\lt_\omega(b).
\end{align*}

The last two statements are right-hand analogues of the
first two and follow if the first two statements are applied
to the contractive idempotent $\conj{\omega}$, which is
defined by $\conj{\omega}(a) = \omega(a^*)^*$.
(Note that $|\conj\omega|_r =|\omega|_l$.)
\end{proof}

Before we formulate the main theorem of this section we present one
consequence of the above formulas: contractive idempotents are
automatically invariant under the adjoint map~$^{\sharp}$.

\begin{corollary}
If $\omega \in \M(\G)$ is a contractive idempotent
then $\omega^\sharp = \omega$.
\end{corollary}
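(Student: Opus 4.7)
The plan is to reduce the claim $\omega^\sharp = \omega$ to the $^\sharp$-invariance of idempotent states. By the theory of idempotent states on coamenable locally compact quantum groups (see e.g.~\cite{salmi-skalski:idem}), every idempotent state $\mu$ satisfies $\mu^\sharp = \mu$; applied to the absolute values $\rom$ and $\lom$ of our $\omega$, which are idempotent states by Theorem~\ref{thm:1st-fact}, this yields $\rom^\sharp = \rom$ and $\lom^\sharp = \lom$. Before using these, I would verify that $\omega\in\Lonenat$ so that $\omega^\sharp$ is well-defined: combining $\omega = v.\rom$ from Theorem~\ref{thm:1st-fact} with $\rom\in\Lonenat$ and $v\in\C_0(\G)$, it suffices to check that left actions by elements of $\C_0(\G)$ preserve $\Lonenat$, which is routine.

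The heart of the argument is an exploitation of Lemma~\ref{lemma:lt_omega}. Applying the counit $\epsilon$ to parts (ii) and (iv) of that lemma, and using $\epsilon\circ\lt_\mu = \mu$, I would extract the scalar identities
\[
\rom\bigl(a\,\lt_\omega(b)^*\bigr) = \omega(a)\,\conj{\omega(b)},
\qquad
\lom\bigl(\lt_\omega(a)^*\,b\bigr) = \conj{\omega(a)}\,\omega(b)
\]
valid for all $a,b\in\C_0(\G)$. Using the general identity $\lt_\omega(b)^* = \lt_{\conj\omega}(b^*)$, these can be rewritten as pairing identities between $\omega$ and $\conj\omega$ through $\rom$ and $\lom$. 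Feeding in the $^\sharp$-invariance of $\rom$ and $\lom$ -- specifically, in the form $\rom\circ S = \rom$ on $\dom(S)$ (and similarly for $\lom$) -- and invoking the anti-multiplicativity of $S$ should transform one identity into the defining relation $\omega^\sharp(a) = \conj{\omega(S(a)^*)} = \omega(a)$, first for $a$ in a suitable core of $S$, and then for all $a\in\C_0(\G)$ by boundedness.

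The main obstacle will be making rigorous sense of all this in the presence of the unbounded antipode $S$. I would circumvent it by working on the standard core $\{(\sigma\ot\id)(V) : \sigma \in B(\ltwo(\G))_*\}$, on which $S$ acts by the explicit formula $S\bigl((\sigma\ot\id)(V)\bigr) = (\sigma\ot\id)(V^*)$, and derive the $^\sharp$-identity first at the level of slice maps. Extending to all of $\C_0(\G)$ then follows from the density of this core, the continuity of the functionals involved, and the strict continuity of the convolution operators $\lt_\omega$, $\lt_{\rom}$, $\lt_{\lom}$ established earlier.
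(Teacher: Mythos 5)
There is a genuine gap, on two fronts. First, your preliminary reduction rests on the claim that $\rom\in\Lonenat$, which is false in general: $\Lonenat$ sits inside $\lone(\G)$, i.e.\ consists of \emph{normal} functionals, whereas an idempotent state such as a Haar idempotent $h_\H\circ\pi_\H$ for a non-open compact quantum subgroup is typically not normal (already classically, Haar measure of a non-open compact subgroup is singular with respect to the Haar measure of $G$). The same objection applies to the goal "$\omega\in\Lonenat$". What is actually true, and all that is needed, is that $\conj{\rom}\circ S$ is bounded on $\dom(S)$ and equals $\rom$ there (Proposition 2.6 of \cite{salmi-skalski:idem}); and the asserted module property "left actions by elements of $\C_0(\G)$ preserve the $\sharp$-condition" is not routine -- for $\omega=v.\rom$ one has $(v.\rom)^\sharp(x)=\rom(v^*S(x))$, and turning $v^*S(x)$ into $S(\text{something})$ requires $v^*$ to lie in the range of $S$, which is not known and is essentially the whole difficulty.

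Second, and more seriously, the "heart" of your argument does not close. Applying the counit to Lemma~\ref{lemma:lt_omega}~(ii) and (iv) does yield the scalar identities you state, but these carry strictly less information than the lemma itself: taking $b=\rt_\nu(a_0)$ (so that $\lt_\omega(b)=v^*$, as in the proof of Theorem~\ref{thm:1st-fact}) the identity $\rom\bigl(a\,\lt_\omega(b)^*\bigr)=\omega(a)\conj{\omega(b)}$ collapses back to $\omega=v.\rom$, which you already knew. To then "invoke anti-multiplicativity of $S$" inside $\rom\bigl(a\,\lt_\omega(b)^*\bigr)$ you would need $\lt_\omega(b)^*\in\dom(S)$, which is not available, and even granting it the resulting relation does not produce $\conj{\omega(S(a)^*)}=\omega(a)$. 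The paper avoids all of this by applying Lemma~\ref{lemma:lt_omega}~(4) not after slicing by $\epsilon$ but directly to the multiplicative unitary in its second leg: with $p=(\id\ot\omega)(V)$ one gets
\[
pp^*\ot 1=(\id\ot\lt_\omega)(V)(\id\ot\lt_\omega)(V)^*=(\id\ot\lt_{\rom})\bigl(VV^*(p^*\ot 1)\bigr)=p^*\ot 1,
\]
hence $p=p^*$, which on the core $\{(\sigma\ot\id)(V)\}$ is verbatim the statement $\omega^\sharp=\omega$ (and simultaneously gives boundedness of $\conj{\omega}\circ S$ for free). No $\sharp$-invariance of $\rom$ or $\lom$ is needed. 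Your final paragraph gestures toward working on this core "at the level of slice maps", which is indeed the right move, but as written your route passes through the counit-sliced identities, where the necessary operator-level information has already been lost.
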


\begin{proof}
The proof is similar to the proof of Proposition 2.6 of
\cite{salmi-skalski:idem} concerning idempotent states.
However, since $\lt_\omega$ is no longer a conditional expectation,
we sketch a proof.

Take the right multiplicative unitary $V$ of $\G$ and
consider $p = (\id\ot \omega)V$.
Then $(\id\ot\lt_\omega)(V) = (p\ot 1)V$.
By Lemma~\ref{lemma:lt_omega} (4), used in the third identity,
\begin{align*}
pp^*\ot 1 &= (p\ot 1)V V^* (p^*\ot 1)
= (\id\ot\lt_\omega)(V)(\id\ot\lt_\omega)(V)^*
= (\id\ot\lt_{|\omega|_r})\bigl(V(\id\ot\lt_\omega)(V)^*\bigr)\\
&= (\id\ot\lt_{|\omega|_r})\bigl(VV^*(p^*\ot1)\bigr)
= p^*\ot 1.
\end{align*}
It follows that $p = p^*$ and so
\[
\omega\bigl((\sigma\ot \id)(V)\bigr)
= \sigma(p) = \sigma(p^*) =
\conj{\omega}\bigl((\sigma\ot \id)(V^*)\bigr)
= \omega^\sharp\bigl((\sigma\ot \id)(V)\bigr)
\]
for every $\sigma\in B(\ltwo(\G))_*$.
\end{proof}

\begin{theorem} \label{thm:TRO expectation}
Let $\omega$ be a contractive idempotent in $\M(\G)$.
Then $\clg_\omega = \lt_\omega(\C_0(\G))$ is a TRO
and $\lt_\omega$ is a TRO conditional expectation.
\end{theorem}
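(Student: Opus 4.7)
The plan is to derive the three module identities defining a TRO conditional expectation directly from Lemma~\ref{lemma:lt_omega}, and then to deduce that $\clg_\omega$ is closed under the ternary product using the idempotence $\lt_\omega \circ \lt_\omega = \lt_{\omega \conv \omega} = \lt_\omega$, which forces $\lt_\omega$ to act as the identity on $\clg_\omega$. Contractivity of $\lt_\omega$ is automatic from $\|\omega\|=1$.

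More concretely, I fix $a \in \C_0(\G)$ and $x = \lt_\omega(x_0)$, $y = \lt_\omega(y_0)$ in $\clg_\omega$, and aim to verify the three identities
\[
\lt_\omega(a x^* y) = \lt_\omega(a) x^* y, \qquad
\lt_\omega(x a^* y) = x \lt_\omega(a)^* y, \qquad
\lt_\omega(x y^* a) = x y^* \lt_\omega(a),
\]
by expanding $x$ and $y$ and sliding the outer $\lt_\omega$ through the products using parts (1)--(4) of Lemma~\ref{lemma:lt_omega}. For the first identity, Lemma~\ref{lemma:lt_omega}(3) applied with second argument $y_0$ turns $\lt_\omega(a x^* y)$ into $\lt_{|\omega|_r}(a x^*) y$, and Lemma~\ref{lemma:lt_omega}(4) then rewrites $\lt_{|\omega|_r}(a x^*)$ as $\lt_\omega(a) x^*$. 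The third identity is symmetric, obtained by applying parts (1) then (2).

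The middle identity is where a little care is needed: the same strategy first reduces the task to $\lt_{|\omega|_\ell}(a^* y) = \lt_\omega(a)^* y$, but this is not immediately one of the four formulas in Lemma~\ref{lemma:lt_omega}. I would take adjoints and use the selfadjointness of $|\omega|_\ell$ (it is positive) to reduce further to $\lt_{|\omega|_\ell}(y^* a) = y^* \lt_\omega(a)$, which is exactly Lemma~\ref{lemma:lt_omega}(2) applied with $(y_0, a)$.

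Once all three module identities are established, the sub-TRO property is almost a formality: for any $a, b, c \in \clg_\omega$, the third identity together with $\lt_\omega|_{\clg_\omega} = \id$ yields $\lt_\omega(a b^* c) = a b^* \lt_\omega(c) = a b^* c$, so $a b^* c \in \clg_\omega$. Combined with contractivity, the three module identities then assert precisely that $\lt_\omega \colon \C_0(\G) \to \clg_\omega$ is a TRO conditional expectation. I do not foresee any real obstacle; the argument is a chain of applications of Lemma~\ref{lemma:lt_omega}, with the mild subtlety being the adjoint detour needed for the middle identity.
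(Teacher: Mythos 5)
Your proposal is correct and follows essentially the same route as the paper: both derive the three TRO conditional expectation identities from Lemma~\ref{lemma:lt_omega} (the paper writes out one and notes the others are of \emph{similar flavour}, or follow from Youngson's result on completely contractive projections), and closure of $\clg_\omega$ under the ternary product is then immediate, whether via your observation that $\lt_\omega$ is idempotent and hence the identity on its image, or the paper's observation that the ternary product is itself a value of $\lt_\omega$. Your adjoint detour for the middle identity is sound, since $\lt_{|\omega|_\ell}$ is a positive map and therefore $*$-preserving, so this is precisely the kind of manipulation the paper leaves to the reader.
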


\begin{proof}
Both statements follow from the identities
\begin{align*}
\lt_\omega(a)\lt_\omega(b)^*\lt_\omega(c)
&=\lt_\omega\bigl(\lt_\omega(a)\lt_\omega(b)^*c\bigr)\\
&=\lt_\omega\bigl(\lt_\omega(a)b^*\lt_\omega(c)\bigr)\\
&=\lt_\omega\bigl(a\lt_\omega(b)^*\lt_\omega(c)\bigr)
\end{align*}
where $a, b, c\in \C_0(\G)$.
We now check the first identity, the other two being
of similar flavour (in fact,
the first identity already implies the other two by the general theory
of completely contractive projections; see Corollary 3 of
\cite{youngson:cc-proj}). By Lemma~\ref{lemma:lt_omega} (1) and (2) we have
\[
\lt_\omega\bigl(\lt_\omega(a)\lt_\omega(b)^*c\bigr)
= \lt_\omega(a)\lt_{|\omega|_l}(\lt_\omega(b)^*c)
= \lt_\omega(a)\lt_\omega(b)^*\lt_\omega(c).
\]
\end{proof}

Every TRO $T\subset B(\Hil;\Kil)$ is associated with three $C^*$-algebras:
the \emph{left linking algebra} $\langle T T^*\rangle\subset B(\Kil)$,
the \emph{right linking algebra} $\langle T^* T\rangle \subset B(\Hil)$
and the \emph{linking algebra}
\[
\alg_T = \begin{bmatrix}
 \langle T T^*\rangle & T \\
 T^* & \langle T^* T\rangle
        \end{bmatrix} \subset B(\Kil \oplus \Hil).
\]
We say that a sub-TRO $X\sub T$ is \emph{nondegenerate} if
\[
\langle X T^* T\rangle = T\qquad\text{and}\qquad
\langle T T^* X\rangle = T.
\]
Equivalently, the linking $C^*$-algebra $\alg_X$ is
nondegenerate in $\alg_T$.
We note that for every contractive idempotent $\omega$
the sub-TRO $\lt_\omega(\C_0(\G))$ is
nondegenerate in $\C_0(\G)$. Indeed,
\[
\lt_\omega(\C_0(\G)) \C_0(\G) =
(\omega\ot\id)\bigl(\cop(\C_0(\G)) (1\ot\C_0(\G))\bigr)
\]
and thus it follows from the quantum cancellation laws \eqref{cancellation} that
$\lt_\omega(\C_0(\G)) \C_0(\G)$ is linearly dense in $\C_0(\G)$.

Theorem 2.1 of \cite{TRO-paper} shows that a TRO conditional
expectation $P: T\to X$ onto a nondegenerate sub-TRO $X\sub T$
extends (in a unique way) to a $C^*$-algebra conditional expectation
$E: \alg_T \to \alg_X$ between the linking algebras.
Explicitly,
\[
E = \begin{bmatrix}  P P^\dag & P \\
                    P^\dag    & P^\dag P
    \end{bmatrix}
\]
where
\[
P^\dag(a) = P(a^*)^*\qquad
P P^\dag(ax^*) = P(a) x^* \qquad
P^\dag P(x^*a) = x^*P(a)
\]
for $a\in T$ and $x\in X$.
It then follows from Lemma~\ref{lemma:lt_omega} that the
extension of $\lt_\omega$ to a conditional expectation
$E:M_2(\C_0(\QG))\to \alg_{L_\omega(\C_0(\G))}$ is equal to
\[
E  = \begin{bmatrix}  \lt_{|\omega|_r} & \lt_\omega \\
                      \lt_{\conj{\omega}}    & \lt_{|\omega|_l}
    \end{bmatrix}.
\]

\section{Abstract characterisation of the
         subspaces of $\C_0(\QG)$ related to contractive idempotents}

In this section we generalise results of Sections 3 and 4 in \cite{salmi-skalski:idem} (see also \cite{FS} and \cite{S}) describing a natural correspondence between the idempotent states on $\M(\G)$ and certain subalgebras of $\C_0(\QG)$.

Denote the left Haar weight of $\G$ by $\phi$ and the right Haar
weight of $\G$ by $\psi$.
Consider the map $\psitwo:M_2(\C_0(\G))_+ \to [0, +\infty]$
defined by
\[
\psitwo\left(\begin{bmatrix} a_1 & a_2 \\
                             a_3 & a_4 \end{bmatrix}\right)
    = \psi(a_1) + \psi(a_4).
\]
It is easy to check that $\psitwo$ is a densely defined, faithful,
lower semicontinuous weight on $\Mtwo$. Moreover,
for $a_1,a_2,a_3, a_4 \in \C_0(\G)$
\[
\begin{bmatrix} a_1 & a_2 \\
                a_3 & a_4 \end{bmatrix}\in\mathfrak{N}_{\psitwo}
\iff a_1,a_2,a_3, a_4 \in \Npsi,
\]
where $\Npsi=\{x\in \C_0(\QG): \psi(x^*x)< \infty\}$, $\mathfrak{N}_{\psitwo}=\{x\in \Mtwo: \psi^{(2)}(x^*x)< \infty\}$. The weight $\phi^{(2)}$ is defined in an analogous way.
A subset $F$ of $\Mtwo$ will be called
\emph{right invariant} if for all $\nu \in \C_0(\G)^*$ the
matrix-lifted right shift map $R_{\nu}^{(2)}:\Mtwo \to \Mtwo$ leaves
$F$ invariant.

If $\eta$ is a weight on a $C^*$-algebra $\alg$ and $\blg$ is a
$C^*$-subalgebra of $\alg$ such that there exists a conditional
expectation from $\alg$ onto $\blg$ which preserves $\eta$
(i.e.\ $\eta(P(a))= \eta(a)$ where $P$ is the conditional expectation
and $a\in\alg_+$ with $\eta(a)<\infty$), then we call $\blg$ an
\emph{$\eta$-expected subalgebra}.

\begin{theorem} \label{idemp->TRO}
Suppose that $X$ is a nondegenerate TRO in $\C_0(\G)$. If
$\alg_X \subset \Mtwo$ is a right invariant, $\psitwo$-expected subalgebra,
then there exists a contractive idempotent functional $\omega \in
\M(\G)$ such that $X=L_{\omega} (\C_0(\G))$. Moreover
$\alg_X \subset \Mtwo$ is also a $\phitwo$-expected
subalgebra.
\end{theorem}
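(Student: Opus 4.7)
The plan is to reverse the passage from contractive idempotents to TRO conditional expectations carried out in Section~\ref{TROSection}: produce $\omega\in\M(\G)$ from the given data and check it is a contractive idempotent with $\lt_\omega(\C_0(\G))=X$.

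First I would pick a $\psitwo$-preserving conditional expectation $E\col\Mtwo\to\alg_X$ and feed it into the bijection of Theorem~2.1 of \cite{TRO-paper} recalled after Theorem~\ref{thm:TRO expectation}, so that $E=\begin{bmatrix}Q_1&P\\ P^\dag& Q_2\end{bmatrix}$ for a unique TRO conditional expectation $P\col\C_0(\G)\to X$. The natural candidate for $\omega$ is $\omega:=\Cou\circ P\in \M(\G)$; to match $\lt_\omega$ with $P$, by Lemma~\ref{invar} it suffices to verify that $P\rt_\nu=\rt_\nu P$ for all $\nu$ in a weak$^*$-dense subset of $\M(\G)$.

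Establishing this commutation is the main obstacle. The right invariance of $\alg_X$ yields $\rt_\nu^{(2)}(\alg_X)\sub\alg_X$, while the right invariance of $\psi$ translates into $\psitwo\circ\rt_\nu^{(2)}=\nu(1)\psitwo$. I would then argue that, by Takesaki's theorem ensuring uniqueness of the weight-preserving conditional expectation, $E$ must commute with $\rt_\nu^{(2)}$; restricting to the $(1,2)$-corner delivers $P\rt_\nu=\rt_\nu P$. The subtle point here is adapting Takesaki's theorem to our non-unital $C^*$-algebraic context, which I would handle by passing to the $\sigma$-weak extension of $E$ to $M_2(\vn(\G))$ and using that $\psitwo$ extends to a faithful normal semifinite weight on this von Neumann algebra. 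Once the commutation is in place, Lemma~\ref{invar} gives $P=\lt_\omega$, so $X=\lt_\omega(\C_0(\G))$; then $\|\omega\|\le\|\Cou\|\|P\|\le 1$ with $\omega\ne 0$ by nondegeneracy of $X$, and idempotency of $\omega$ follows from $\lt_{\omega\conv\omega}=\lt_\omega\circ\lt_\omega=P^2=P=\lt_\omega$ together with injectivity of $\mu\mapsto\lt_\mu$.

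The $\phitwo$-expected conclusion is then immediate from the explicit form of $E$ provided by Theorem~\ref{thm:TRO expectation}, namely
\[
E=\begin{bmatrix}\lt_{\rom}&\lt_\omega\\ \lt_{\conj\omega}&\lt_{\lom}\end{bmatrix}.
\]
Indeed, for any positive $M=\begin{bmatrix}m_1&m_2\\ m_3&m_4\end{bmatrix}\in\Mtwo$ with $\phitwo(M)<\infty$, the left invariance of $\phi$ together with $\rom(1)=\lom(1)=1$ (both being states by Theorem~\ref{thm:1st-fact}) gives
\[
\phitwo(E(M))=\phi(\lt_{\rom}(m_1))+\phi(\lt_{\lom}(m_4))=\rom(1)\phi(m_1)+\lom(1)\phi(m_4)=\phitwo(M),
\]
so the same $E$ is simultaneously $\phitwo$-preserving and the second assertion follows at once.
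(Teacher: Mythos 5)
Your overall architecture (extract a TRO conditional expectation $P$ onto $X$ from the $(1,2)$-corner of the $\psitwo$-preserving expectation $E$, identify $P=\lt_\omega$ via Lemma~\ref{invar}, then get the $\phitwo$-statement from the explicit form of $E$) is the paper's, but the pivotal step is not justified. You assert that $E$ commutes with $R_{\fatnu}$ ``by Takesaki's theorem ensuring uniqueness of the weight-preserving conditional expectation'', using only that $R_{\fatnu}$ leaves $\alg_X$ invariant and scales $\psitwo$ (the scaling identity itself is only clear for positive $\fatnu$). Uniqueness cannot deliver this: $R_{\fatnu}$ is not invertible, so there is no conjugated candidate expectation to compare with $E$, and in general a map which preserves the subalgebra and preserves the weight need \emph{not} commute with the weight-preserving expectation onto it --- already on $\complex^4$ with the uniform trace and the subalgebra of functions constant on two two-point blocks one can write down a trace-preserving linear map leaving that subalgebra invariant which fails to commute with the block-averaging expectation. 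What is genuinely needed is that the $\psitwo$-adjoint of $R_{\fatnu}$ also leaves $\alg_X$ invariant, equivalently that $R_{\fatnu}$ maps the orthocomplement $(\alg_X)^{\perp}$ (taken with respect to $\psitwo$) into itself. That adjoint is $R_{\fatrho}$ with $\rho_{ij}=\nu_{ij}^{\sharp}$, so it exists only for $\nu_{ij}\in\Lonenat$; this is exactly why the paper first proves $ER_{\fatnu}=R_{\fatnu}E$ for $\fatnu\in M_2(\Lonenat)$, using the identity $\psi(a^*R_{\nu_{ij}}(c))=\psi(R_{\rho_{ij}}(a)^*c)$ together with Lemma~1.3 of \cite{salmi-skalski:idem}, and only then upgrades to all $\fatnu\in\Mtwo^*$ via Lemma~\ref{invar}. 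Passing to $M_2(\vn(\G))$ and citing Takesaki does not repair the argument: Takesaki's theorem concerns existence and uniqueness of expectations under modular invariance, not commutation with arbitrary weight-scaling maps that preserve the subalgebra.

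A secondary gap is your opening appeal to ``the bijection of Theorem~2.1 of \cite{TRO-paper}''. As recalled in the paper, that theorem goes one way only: a TRO conditional expectation onto a nondegenerate sub-TRO extends uniquely to a conditional expectation of linking algebras. It does not say that an arbitrary conditional expectation $E\colon\Mtwo\to\alg_X$ is a Schur (corner-preserving) map with $(1,2)$-corner a TRO expectation onto $X$, which is what you need in order to define $P$ at all. In the paper the Schur property is deduced \emph{from} the commutation relation, applied with $\fatnu=\epsilon_{ij}$; alternatively one can argue corner preservation from nondegeneracy of $\alg_X$ in $\Mtwo$ together with the $\alg_X$-bimodule property of $E$, but some argument must be supplied. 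Once these two points are fixed, the remaining steps --- $\omega:=\Cou\circ P$, injectivity of $\mu\mapsto\lt_\mu$ via $\Cou\circ\lt_\mu=\mu$, nondegeneracy giving $\omega\neq 0$, and the $\phitwo$-computation from $E=\begin{bmatrix}\lt_{\rom}&\lt_\omega\\ \lt_{\conj\omega}&\lt_{\lom}\end{bmatrix}$ --- are in line with the paper's proof.
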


\begin{proof}
Assume that $X$ satisfies the assumptions of the theorem.
Let $E: \Mtwo \to \alg_X$ be the corresponding $\psitwo$-preserving
conditional expectation. Note first that the right invariance of
$\alg_X$ means that each of the spaces $\la XX^* \ra$, $\la X^*X \ra$
and $X$ are right invariant (as subsets of $\C_0(\G)$). This
in turn allows us to deduce that for any $\fatnu \in \Mtwo^*$ the map
\[
R_{\fatnu}:= \begin{bmatrix} \rt_{\nu_{11}} & \rt_{\nu_{12}} \\
                           \rt_{\nu_{21}} & \rt_{\nu_{22}}
            \end{bmatrix}  
\]
leaves $\alg_X$ invariant
(we use the natural identification $\Mtwo^* = M_2(\CG^*)$).
In the next step we show that in fact for all $\fatnu \in \Mtwo^*$
\begin{equation}
E R_\fatnu = R_\fatnu E.  \label{commutexp}
\end{equation}
The method will be similar to that used in Proposition 3.3 of
\cite{salmi-skalski:idem}. Note first that by Lemma \ref{invar} it suffices to show the
above equality for $\fatnu$ in a weak$^*$-dense subset of
$\Mtwo^*$. Assume then that
\[
\fatnu = \begin{bmatrix} \nu_{11} &
  \nu_{12} \\ \nu_{21} & \nu_{22} \end{bmatrix} \in M_2 (\Lonenat)
\]
and let $\rho_{ij}\in \Lonenat$ be such that
$\rho_{ij} = \nu_{ij}^{\sharp}$ for $i,j=1,2$. Further write $\fatrho =
\begin{bmatrix} \rho_{11} & \rho_{12} \\ \rho_{21} &
  \rho_{22} \end{bmatrix}$ and recall that it follows from the
computations in the proof of Proposition 3.3 in
\cite{salmi-skalski:idem} that for any $a,b \in \Npsi$ the following
equality holds:
$ \psi (a^* R_{\nu_{ij}}(c)) = \psi (R_{\rho_{ij}} (a)^* c)$.

As $R_\fatnu (\alg_X) \subset \alg_X$, by Lemma 1.3 of \cite{salmi-skalski:idem} to prove equality
\eqref{commutexp} it suffices to show that  $R_\fatnu
((\alg_X)^{\perp}) \subset (\alg_X)^{\perp}$,
where
\[ (\alg_X)^{\perp} = \set{\mathbf{a}\in\Mtwo\cap\mathfrak{N}_{\psitwo}}{\psitwo(\mathbf{c}^*\mathbf{a})= 0 \textup{ for all } \mathbf{c} \in\alg_X\cap\mathfrak{N}_{\psitwo}}. \]
Let then $\mathbf{a}
= \begin{bmatrix} a_{11} & a_{12} \\ a_{21} & a_{22} \end{bmatrix} \in
\alg_X \cap \mathfrak{N}_{\psitwo}$ and $\mathbf{c} = \begin{bmatrix}
  c_{11} & c_{12} \\ c_{21} & c_{22} \end{bmatrix} \in
(\alg_X)^\perp$.
We thus have, by the formulas above,
\begin{align*}
\psitwo( \mathbf{a}^* R_\fatnu (\mathbf{c}))
 &= \psi (a_{11}^* R_{\nu_{11}} (c_{11}) + a_{21}^* R_{\nu_{21}}(c_{21}))
  + \psi (a_{12}^* R_{\nu_{12}} (c_{12}) + a_{22}^* R_{\nu_{22}} (c_{22})) \\
 &= \psi(R_{\rho_{11}}(a_{11})^* c_{11} +  R_{\rho_{21}}(a_{21})^* c_{21}
  + R_{\rho_{12}}(a_{12})^* c_{12} + R_{\rho_{22}}(a_{22})^* c_{22})\\
 &= \psitwo( R_\fatrho (\mathbf{a})^*  \mathbf{c}) =0,
\end{align*}
where the last formula follows from the definition of
$(\alg_X)^{\perp}$. This finishes the proof of equality
\eqref{commutexp}.

The first consequence of \eqref{commutexp} is the following.
Consider $\epsilon_{11}:= \begin{bmatrix} \epsilon &  0 \\0 &
  0 \end{bmatrix} \in \Mtwo^*$, where $\epsilon\in \M(\G)$ denotes the counit. It is easy to see that
$R_{\epsilon_{11}}=  \begin{bmatrix} \id_{\CG} &  0 \\0 &
  0 \end{bmatrix}$, so the analysis of the commutation relations of
$E$ with $R_{\epsilon_{11}}$ (and its analogues) implies that $E$
is a so-called \emph{Schur map}, i.e.\
\[
E = \begin{bmatrix} E_{11} & E_{12} \\
                    E_{21} & E_{22} \end{bmatrix},
\]
where $E_{ij}:\CG \to \CG$ are completely
contractive projections; in particular $E_{12}$ is a completely
contractive projection onto $X$. It is further easy to see that each
of the maps $E_{ij}$ commutes with all the maps $R_{\nu}$, $\nu \in
\CG^*$. Hence by Lemma \ref{invar} there exist
contractive functionals $\omega_{ij} \in \CG^*$ such that $E_{ij} =
L_{\omega_{ij}}$ for $i,j=1,2$. Each $\omega_{ij}$ is a contractive
idempotent functional. It suffices to put $\omega:=\omega_{12}$ to end
the first part of the proof.

The second part follows because
in Section \ref{TROSection} we showed that if $X=L_{\omega} (\CG)$, then the map
\[
E':= \begin{bmatrix}
         L_{\rom} & L_{\omega} \\
         L_{\overline{\omega}} & L_{\lom}
\end{bmatrix}
\]
is a conditional expectation onto $\alg_X$;
it is easy to check that it preserves $\phitwo$.
Moreover, $E' = E$ by the uniqueness of the extension of
the TRO expectation $L_\omega$ (Theorem 2.1 of \cite{TRO-paper}).
\end{proof}

Recall that $\QG$ is said to be \emph{unimodular} if $\phi=\psi$.
If $\G$ is unimodular and $\alg$ is a $\psi$-expected $C^*$-subalgebra
of $\C_0(\G)$, then we simply call it \emph{expected}.

\begin{corollary}
Assume that $\G$ is unimodular.
There is a bijective correspondence between
\begin{rlist}
\item contractive idempotent functionals $\omega \in \M(\G)$;
\item nondegenerate TROs $X$ in $\C_0(\G)$ such that their associated
  $C^*$-algebra $\alg_X \subset \Mtwo$ is right invariant and
  expected.
\end{rlist}
In that case the conditional expectation $E:\Mtwo \to\alg_X$
preserving the invariant weight is given by the formula
\[
E= \begin{bmatrix} L_{\rom} & L_{\omega} \\
                   L_{\overline{\omega}} & L_{\lom} \end{bmatrix}.
\]
\end{corollary}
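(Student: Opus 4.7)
The plan is to verify that $\omega\mapsto L_\omega(\C_0(\G))$ is a bijection between the sets in (i) and (ii), with inverse provided by Theorem~\ref{idemp->TRO}, and then to identify the associated conditional expectation by a uniqueness argument.

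First I would confirm that the forward map is well defined: given a contractive idempotent $\omega\in\M(\G)$, Theorem~\ref{thm:TRO expectation} gives that $X_\omega:=L_\omega(\C_0(\G))$ is a TRO, while the discussion following \eqref{cancellation} in Section~\ref{TROSection} shows it is nondegenerate in $\C_0(\G)$. To obtain the right invariance of $\alg_{X_\omega}\sub\Mtwo$ I would exploit the commutation $L_\omega R_\nu = R_\nu L_\omega$ provided by Lemma~\ref{invar}, together with its analogues for $L_{|\omega|_r}$, $L_{|\omega|_l}$ and $L_{\overline\omega}$. These imply that each of the four spaces $X_\omega$, $X_\omega^*$, $\la X_\omega X_\omega^*\ra = L_{|\omega|_l}(\C_0(\G))$ and $\la X_\omega^* X_\omega\ra = L_{|\omega|_r}(\C_0(\G))$ is right invariant in $\C_0(\G)$, so every matrix-lifted operator $R_\fatnu$ stabilises $\alg_{X_\omega}$.

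For the expected property I would invoke the explicit conditional expectation $E:\Mtwo\to\alg_{X_\omega}$ constructed at the end of Section~\ref{TROSection}, namely
\[
E=\begin{bmatrix} L_{|\omega|_r} & L_\omega \\ L_{\overline\omega} & L_{|\omega|_l}\end{bmatrix}.
\]
Since $|\omega|_r$ and $|\omega|_l$ are idempotent \emph{states} by Theorem~\ref{thm:1st-fact}, left invariance of $\phi$ yields $\phi\circ L_{|\omega|_r}=\phi$ and $\phi\circ L_{|\omega|_l}=\phi$. In the unimodular case $\phi=\psi$, so both diagonal entries of $E$ preserve $\psi$, and therefore $E$ preserves $\psitwo$, witnessing that $\alg_{X_\omega}$ is expected.

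For the inverse direction and for bijectivity, Theorem~\ref{idemp->TRO} directly produces from any TRO $X$ satisfying (ii) a contractive idempotent $\omega$ with $L_\omega(\C_0(\G))=X$. In the converse composition, starting from $\omega$ one forms $X_\omega$ and then extracts (through Theorem~\ref{idemp->TRO} applied to $E$ above) some $\omega'$ with $L_{\omega'} = E_{12} = L_\omega$; applying the counit $\epsilon$ to both sides gives $\omega' = \epsilon\circ L_{\omega'}=\epsilon\circ L_\omega=\omega$. The formula for $E$ in the statement then follows from the uniqueness of the $\psitwo$-preserving conditional expectation onto $\alg_X$, a standard consequence of the faithfulness and semifiniteness of $\psitwo$. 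The only point requiring genuine care is the simultaneous bookkeeping of right invariance and weight preservation across all four matrix blocks, but in each block this reduces to the corresponding property of a single convolution operator already supplied by Lemma~\ref{invar} and Theorem~\ref{thm:1st-fact}.
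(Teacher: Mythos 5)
Your proposal is correct and follows essentially the paper's own (very terse) route: well-definedness of $\omega\mapsto \lt_\omega(\C_0(\G))$ via Theorem~\ref{thm:TRO expectation} and the explicit expectation of Section~\ref{TROSection} (which preserves $\phitwo=\psitwo$ by unimodularity and left invariance), surjectivity via Theorem~\ref{idemp->TRO}, and injectivity plus the formula for $E$ via uniqueness of the weight-preserving conditional expectation together with $\omega=\epsilon\circ \lt_\omega$. The only slip is the harmless swap of the corner identifications --- in fact $\la XX^*\ra=\lt_{\rom}(\C_0(\G))$ and $\la X^*X\ra=\lt_{\lom}(\C_0(\G))$ --- which does not affect your argument, since both spaces are right invariant.
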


\begin{proof}
An easy consequence of the last theorem and the results of Section 2.
\end{proof}

We say that a TRO $X$ contained in a $C^*$-algebra $\alg$ is
\emph{balanced} if $\la X^*X \ra = \la X X^* \ra$.
A right invariant C*-subalgebra $\clg$ of $\C_0(\G)$
(such as $\la X^*X \ra$ or $\la X X^* \ra$) is said to be
\emph{symmetric} if
$V^*(1\ot c)V$ is in $M\bigl(\K_{\ltwo(\G)}\ot \clg\bigr)$
for every $c\in \clg$.
The symmetry condition originates from \cite{tomatsu:coideal}
and \cite{S}.

\begin{corollary}
There is a bijective correspondence between
\begin{rlist}
\item contractive idempotent functionals $\omega\in \M(\G)$ whose
  left \textup{(}equivalently, right; equivalently left and right\textup{)} 
  absolute values are Haar idempotents;
\item nondegenerate balanced TROs in $\C_0(\G)$ such that their
  associated $C^*$-algebra $\alg_X \subset\Mtwo$ is right-invariant
  and $\psitwo$-expected \textup{(}equivalently, expected\textup{)}
  and the algebra $\la  XX^* \ra = \la X^* X \ra$ is symmetric.
\end{rlist}
\end{corollary}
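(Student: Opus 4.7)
The plan is to bootstrap from the preceding corollary, which already puts nondegenerate TROs with right-invariant $\psitwo$-expected linking algebra in bijection with contractive idempotents $\omega \in \M(\G)$, and then pin down the extra Haar-idempotent condition on $\rom$ and $\lom$ in purely TRO-theoretic terms. The key identification is that the diagonal corners of the linking algebra $\alg_X$ are precisely the subalgebras of $\C_0(\G)$ associated to the idempotent states $\rom$ and $\lom$: the explicit formula for the extension $E: \Mtwo \to \alg_X$ of $L_\omega$ given at the end of Section~\ref{TROSection} immediately yields
\[
\la X X^* \ra = L_{\rom}(\C_0(\G)), \qquad \la X^* X \ra = L_{\lom}(\C_0(\G)),
\]
so $X$ is balanced if and only if $L_{\rom}(\C_0(\G)) = L_{\lom}(\C_0(\G))$.

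The second ingredient is the correspondence established in \cite{salmi-skalski:idem} (see also \cite{S}, with the symmetry condition originating in \cite{tomatsu:coideal}): idempotent states $\sigma$ on $\G$ are in bijection with right-invariant $\psi$-expected $C^*$-subalgebras $\clg$ of $\C_0(\G)$ via $\sigma \mapsto L_\sigma(\C_0(\G))$, and such a $\sigma$ is a Haar idempotent precisely when $\clg$ is symmetric. Applying this to $\sigma = \rom$ characterises Haarness of $\rom$ as symmetry of $\la X X^* \ra$. Combining with Theorem~\ref{thm:cts-fact} and the left-hand analogue noted in the following remark -- which together give that $\rom$ is a Haar idempotent iff $\lom$ is, with equality $\rom = \lom$ in that case -- also settles the three-way equivalence inside (i) and shows that Haarness of $\rom$ forces $X$ to be balanced automatically.

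The two directions of the correspondence now follow easily. For (i) $\Rightarrow$ (ii) apply the previous corollary to obtain a nondegenerate TRO $X = L_\omega(\C_0(\G))$ with right-invariant $\psitwo$-expected linking algebra, and read off balancedness and symmetry from the observations above. For (ii) $\Rightarrow$ (i) the previous corollary yields a contractive idempotent $\omega$ with $X = L_\omega(\C_0(\G))$; the symmetry hypothesis on $\la X X^* \ra = L_{\rom}(\C_0(\G))$ forces $\rom$ to be a Haar idempotent, and hence so is $\lom$. The main obstacle is ensuring that the symmetry characterisation of Haar idempotents is available in the precise formulation needed -- in particular that the symmetry notion defined just before the corollary coincides with the one distinguishing Haar idempotents among all idempotent states in \cite{salmi-skalski:idem,S} -- and that the bijection between idempotent states and nondegenerate right-invariant $\psi$-expected subalgebras is at our disposal in the generality required here.
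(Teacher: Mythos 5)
Most of your scaffolding matches the paper's own argument: the identification of the corners of the linking algebra, $\la XX^*\ra=\lt_{\rom}(\C_0(\G))$ and $\la X^*X\ra=\lt_{\lom}(\C_0(\G))$, read off from the explicit expectation $E$ of Section~\ref{TROSection}; the characterisation ``$\sigma$ is a Haar idempotent iff $\lt_\sigma(\C_0(\G))$ is symmetric'' from Theorem 3.7 of \cite{salmi-skalski:idem}; and Theorem~\ref{thm:cts-fact} (with the remark following it) to get $\rom=\lom$, hence balancedness and the three-way equivalence inside (i). Your direction (ii)$\Rightarrow$(i) is also fine, provided you invoke Theorem~\ref{idemp->TRO} directly rather than the preceding corollary, since Theorem~\ref{idemp->TRO} needs no unimodularity.

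The genuine gap is in (i)$\Rightarrow$(ii). You obtain the $\psitwo$-expectedness of $\alg_X$ by ``applying the previous corollary'', but that corollary assumes $\G$ unimodular, while the present statement does not; in fact, without unimodularity the paper nowhere claims a bijection between arbitrary contractive idempotents and $\psitwo$-expected TROs, only the TRO-to-idempotent direction of Theorem~\ref{idemp->TRO}. For general $\G$, left convolution operators automatically preserve the \emph{left} Haar weight (which is why $\phitwo$-expectedness is free in Theorem~\ref{idemp->TRO}), but there is no reason why the expectation with diagonal corners $\lt_{\rom}$, $\lt_{\lom}$ should preserve $\psitwo$ for an arbitrary contractive idempotent -- this is exactly what the unimodularity hypothesis was compensating for. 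The Haar hypothesis is what rescues the forward direction here: since $\rom=\lom$ is a Haar idempotent, Proposition 3.12 of \cite{salmi-skalski:idem} gives that $\lt_{\rom}$ preserves \emph{both} Haar weights, so $E$ preserves both $\psitwo$ and $\phitwo$; this step, absent from your proposal, is also what justifies the parenthetical ``(equivalently, expected)'' in (ii), which you leave unaddressed. With that insertion your argument coincides with the paper's proof.
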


(In fact, if either $\la XX^* \ra$ or  $\la X^* X \ra$ is symmetric,
then they are equal.)

\begin{proof}
Let $\omega\in \M(\G)$ be a contractive idempotent and put
$X=L_{\omega} (\CG)$. Theorem 3.7 of \cite{salmi-skalski:idem} implies
that $|\omega|_l$ is a Haar idempotent if and only if
$L_{|\omega|_l} (\CG) = \la X^* X\ra$ is symmetric. Moreover in that case
$|\omega|_l = |\omega|_r$ by Theorem~\ref{thm:cts-fact}
(so also $\la X^*X \ra = \la X X^* \ra$), and the
conditional expectation
\[
E= \begin{bmatrix} L_{\rom} & L_{\omega} \\
                   L_{\overline{\omega}} & L_{\rom} \end{bmatrix}
\]
onto $\alg_X$ preserves both $\psitwo$ and $\phitwo$ in view of
Proposition 3.12 of \cite{salmi-skalski:idem}.

Conversely, if $X$ is a nondegenerate TRO and $\alg_X$ is
$\psitwo$-expected, then Theorem \ref{idemp->TRO} yields a contractive
idempotent $\omega \in \CG^*$ such that $X=L_{\omega}(\CG)$. Moreover
the fact that $\la X^*X \ra = L_{|\omega|_l}$ is symmetric implies
that $|\omega|_l$ is a Haar idempotent, so $|\omega|_l = |\omega|_r$
and $L_{|\omega|_l}$ preserves both invariant weights. Hence $\alg_X$
is expected.
\end{proof}

\section{Examples of contractive idempotents and some special cases}

In this section we collect certain examples and special cases of the
results obtained in the previous sections.

\begin{propn}[\cite{greenleaf:homo}] \label{groupcontidemp}
Let $G$ be a locally compact group. Then every
contractive idempotent measure $\mu$ on $G$
is of the form
\[
d\mu(s) = \chi(s)\,dm_H(s)
\]
where $m_H$ is the Haar measure of a compact subgroup
$H$ of $G$ and $\chi$ is a continuous character of $H$.
\end{propn}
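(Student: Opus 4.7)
The plan is to deduce the result from Theorems~\ref{thm:1st-fact} and~\ref{thm:cts-fact} together with the classical Kawada--It\^o theorem on idempotent probability measures. I identify $\M(G)$ with the ordinary measure algebra $M(G)$, and observe that any classical locally compact group is coamenable as a quantum group (with counit $f\mapsto f(e)$), so the blanket assumption of the paper is in force.

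Given a contractive idempotent measure $\mu$ on $G$, Theorem~\ref{thm:1st-fact} ensures that the right absolute value $|\mu|_r \in \M(G)$ is an idempotent state on $\C_0(G)$, that is, an idempotent probability measure on $G$. By the classical Kawada--It\^o theorem, every such measure is the normalised Haar measure $m_H$ of a unique compact subgroup $H \sub G$, so $|\mu|_r = m_H$. This is automatically a Haar idempotent in the quantum-group sense: the associated compact quantum subgroup is simply $H$, and the corresponding morphism $\pi_H: \C_0(G) \to \C(H)$ is the restriction map $f\mapsto f|_H$.

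Consequently Theorem~\ref{thm:cts-fact} applies and produces a group-like unitary $u \in \C(H)$ satisfying
\[
\mu(f) \;=\; h_H\bigl(\pi_H(f)\,u\bigr) \;=\; \int_H f(s)\,u(s)\,dm_H(s) \qquad (f\in\C_0(G)),
\]
together with the identity $|\mu|_l = |\mu|_r = m_H$. It then remains to identify group-like unitaries in $\C(H)$, for a classical compact group $H$, with continuous characters. Since the coproduct on $\C(H)$ is given by $\cop_H(u)(s,t) = u(st)$, the condition $\cop_H(u) = u \otimes u$ is equivalent to the functional equation $u(st) = u(s)u(t)$ for all $s,t\in H$; unitarity forces $|u(s)|=1$. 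Setting $\chi := u$ yields $d\mu(s) = \chi(s)\,dm_H(s)$.

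The only nontrivial external input required is the Kawada--It\^o identification of idempotent probability measures with Haar measures on compact subgroups; everything else is a straightforward translation of the general quantum-group framework into its commutative incarnation and presents no real obstacle.
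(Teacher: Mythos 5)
Your proof is correct and takes essentially the same route as the paper: identify the absolute value of $\mu$ with the Haar measure $m_H$ of a compact subgroup, apply Theorem~\ref{thm:cts-fact}, and observe that a group-like unitary in $\C(H)$ is exactly a continuous character. The only cosmetic difference is that you invoke the classical Kawada--It\^o theorem to see that $|\mu|_r$ is a Haar idempotent, whereas the paper simply asserts this (it also follows internally from Theorem~\ref{thm:Haar-equiv}, since $N_{|\mu|_r}$ is automatically an ideal in the commutative algebra $\C_0(G)$).
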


\begin{proof}
Every idempotent state on $G$ is a Haar idempotent, so
Theorem \ref{thm:cts-fact} applies. Moreover, $|\mu|_r = |\mu| = m_H$
for some compact subgroup $H$ of $G$.
By Theorem~\ref{thm:cts-fact}, $u$ is a continuous character
in $\C(H)$, so we are done.
\end{proof}

In view of  the above proposition it is natural to ask what do
contractive idempotent functionals look like in the dual case, i.e.\ when
$\C_0(\G)=\C^*(\Gamma)$ for a locally compact amenable group $\Gamma$.
The answer was given by
Theorem 2.1 (i) of \cite{IS1} (in a more general context where
$\Gamma$ need not be amenable); here we show how the special case of
that result can be quickly deduced via the techniques developed in
this paper.

\begin{propn}[\cite{IS1}] \label{dual}
Let $G$ be an amenable locally compact group and let $\omega \in
\C^*(G)^*= B(G)$. Then $\omega$ is a contractive idempotent if and
only if it is of the form $1_C$, where $C$ is an open coset of
$G$. Moreover $|\omega|_r$ \textup{(}equivalently, $|\omega_l|$\textup{)}
is a Haar idempotent if and only if $C^{-1}C$ is a normal subgroup of $G$.
\end{propn}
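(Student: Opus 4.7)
Both directions will exploit that $\hat G$ is cocommutative: since $\cop(\lambda(s)) = \lambda(s) \ot \lambda(s)$, convolution on $\C^*(G)^* = B(G)$ coincides with pointwise multiplication of continuous Fourier--Stieltjes functions on $G$, and every $\lambda(s) \in M(\C^*(G)) = \C_b(\hat G)$ is group-like.

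For the ``if'' direction I would write the open coset as $C = sH$ with $H$ an open subgroup of $G$. Then $\sigma := 1_H$ is positive, of norm one, and idempotent under pointwise multiplication, hence an idempotent state on $\C^*(G)$. Setting $u := \lambda(s)$, the genuine group-like identity $\cop(u) = u \ot u$ gives $\cop(u) - u \ot u = 0 \in N_{\sigma \ot \sigma}$ trivially, so Proposition~\ref{prop:construct} produces a contractive idempotent $\sigma.u^* \in B(G)$. Evaluating at $\lambda(t)$ yields $\sigma.u^*(\lambda(t)) = \sigma(\lambda(s^{-1}t)) = 1_H(s^{-1}t) = 1_{sH}(t)$, so $\sigma.u^* = 1_C$.

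For the converse, the pointwise-product form of convolution forces $\omega(t) \in \{0,1\}$ for all $t \in G$, so $\omega = 1_E$ for some clopen $E \sub G$ (continuity comes from membership in $B(G)$). To see that $E$ must be a single coset, I would invoke Theorem~\ref{thm:TRO expectation}: the image $X := L_\omega(\C^*(G))$ is a TRO in $\C^*(G)$. A short computation using $\cop(\lambda(s)) = \lambda(s) \ot \lambda(s)$ gives $L_\omega(\lambda(f)) = \lambda(1_E f)$ for every $f \in \lone(G)$, so $X$ is the closed linear span of $\{\lambda(f) : \supp f \sub E\}$. Given any $s_1, s_2, s_3 \in E$, approximate the Dirac masses at these points by $\lone$-functions $f_i$ supported in $E$ (possible since $E$ is open); the TRO identity
\[
\lambda(f_1)\lambda(f_2)^*\lambda(f_3) = \lambda(f_1 \ast f_2^* \ast f_3) \in X,
\]
combined with $L_\omega|_X = \id_X$ (from $L_\omega^2 = L_\omega$), forces $f_1 \ast f_2^* \ast f_3$ to be supported in $E$. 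Passing to the $\delta$-limit yields $s_1 s_2^{-1} s_3 \in E$, and the standard characterisation of cosets then gives $E = s_0 H$ with $H = E^{-1} E$ an open subgroup.

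For the moreover statement, Theorem~\ref{thm:1st-fact} ensures that $|\omega|_r$ is an idempotent state; a direct identification using the explicit form $\omega = 1_H.\lambda(s^{-1})$ produced above gives $|\omega|_r = 1_{CC^{-1}}$, which is a subgroup conjugate to $C^{-1}C$. By Theorem~\ref{thm:Haar-equiv} the idempotent state $1_K$ is a Haar idempotent precisely when $1_K = h_\H \circ \pi_\H$ for some compact quantum subgroup $\H$ of $\hat G$. Cocommutativity of $\hat G$ forces every such $\H$ to be itself cocommutative, hence of the form $\widehat{G/N}$ for some open normal subgroup $N \trianglelefteq G$; in that case one checks $h_{\widehat{G/N}} \circ \pi = 1_N$. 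So $1_K$ is Haar exactly when $K$ is open and normal, and since conjugation preserves normality this is equivalent to $C^{-1}C$ being a normal subgroup of $G$. The delicate point in the entire argument is the $\delta$-approximation step in the converse direction, where an $\lone$-support condition must be upgraded to the pointwise conclusion $s_1 s_2^{-1} s_3 \in E$; this succeeds because $E$ and its complement are both open, so any $\lone$-mass concentrated near a point outside $E$ can be detected.
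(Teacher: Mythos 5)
Your argument is correct, and its core is the same as the paper's: cocommutativity makes convolution in $B(G)$ pointwise multiplication, so an idempotent is $1_E$ with $E$ clopen, and the TRO structure of $\lt_\omega(\C^*(G))$ from Theorem~\ref{thm:TRO expectation} forces $g_1g_2^{-1}g_3\in E$, whence $E$ is an open coset. You diverge in three places, all legitimately. For the ``if'' direction you factor $1_{sH}=1_H.\lambda(s)^*$ and invoke Proposition~\ref{prop:construct} with the genuinely group-like $u=\lambda(s)\in M(\C^*(G))$, whereas the paper gets contractivity from the automatic contractivity of TRO conditional expectations; your route reuses the Section~2 machinery and matches the factorisation discussed in the remark after Theorem~\ref{thm:1st-fact}. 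For the coset step the paper applies the identities from the proof of Theorem~\ref{thm:TRO expectation} to the generating unitaries $\lambda(g_i)$ via strict extensions, while you approximate Dirac measures by $\lone$-functions supported in $E$; your upgrade from the a.e.\ statement $1_E(f_1\ast f_2^*\ast f_3)=f_1\ast f_2^*\ast f_3$ to $s_1s_2^{-1}s_3\in E$ does go through, since taking $f_i\ge 0$ makes the triple convolution a nonzero positive function supported in $U_1U_2^{-1}U_3$, which for small enough $U_i$ would lie in the open set $G\sm E$ if $s_1s_2^{-1}s_3\notin E$ --- so the ``delicate point'' you flag is genuinely fine, if slightly clunkier than the paper's strict-extension trick. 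Finally, for the ``moreover'' statement the paper simply defers to Section~7 of \cite{S}, while you check it directly; your identification $|\omega|_r=1_{CC^{-1}}$ (and $|\omega|_l=1_{C^{-1}C}$) is correct under the paper's conventions --- to quote uniqueness of the polar decomposition you should cut $\lambda(s^{-1})$ by the support projection of $1_{CC^{-1}}$ --- and your classification of Haar idempotents on $\widehat{G}$ as $1_N$ with $N$ open and normal silently uses two standard facts worth naming: a cocommutative compact quantum group is the dual of a discrete group, and the group-like unitaries of $\C^*(\Gamma)$ are exactly the canonical unitaries $\lambda(\gamma)$, so that the quotient morphism comes from a continuous surjection $G\to\Gamma$ with open normal kernel. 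As a sketch this is at the same level of rigour as the paper's own proof, and the self-contained treatment of the last statement is a small gain over the citation.
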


\begin{proof}(Sketch)
Let $\omega\in B(G)$. It is easy to check that the map
$L_{\omega}:M(\C^*(G)) \to M(\C^*(G))$ is then the Schur multiplier
corresponding to the function $\omega$. Thus the fact that
$L_{\omega}$ is an idempotent implies that $\omega$ is a
characteristic function. Due to continuity it must be a characteristic
function of a clopen set, say $C$. Further if $\omega$ is a
contractive idempotent, (strict extensions of) the equalities
displayed in the proof of Theorem \ref{thm:TRO expectation} applied to
generating unitaries in $M(\C^*(G))$ imply that if $g_1, g_2, g_3 \in
C$, then also $g_1 g_2^{-1} g_3 \in C$. By Proposition 1.1 of
\cite{IS1} $C$ must then be a coset of $G$.
The other implication is easy to check, for example using the mentioned before fact
that TRO conditional expectations are necessarily contractive.

Finally the last statement in the proposition is a consequence of the
results in Section 7 of \cite{S}; it can be also checked directly.
\end{proof}

In fact our results allow us also to produce in some cases a complete list of contractive idempotents on a given (locally) compact quantum group $\QG$. The next proposition presents a result of that type for $SU_q(2)$ (see \cite{woronowicz87b} for the definition of the latter quantum group).

\begin{propn}
Let $q\in (0,1)$. The contractive idempotents on $\SU_q(2)$ are
the Haar state and the Haar idempotents associated with
the subgroups $\T$ and $\Z_n$ combined with the characters of
those subgroups.
\end{propn}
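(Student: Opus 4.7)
The overall plan is to combine three ingredients: (a) the classification of idempotent states on $\SU_q(2)$, (b) the structure theorem for contractive idempotents (Theorem~\ref{thm:1st-fact}), which reduces the problem to the case where $|\omega|_r$ is a Haar idempotent, and (c) Theorem~\ref{thm:cts-fact}, which in that case expresses $\omega$ in terms of a group-like unitary on a compact quantum subgroup. Since $\SU_q(2)$ is coamenable (a classical fact of Banica), the standing assumption of the paper is satisfied and all results apply.

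Let $\omega$ be a contractive idempotent in $\M(\SU_q(2))$. By Theorem~\ref{thm:1st-fact}, $|\omega|_r$ is an idempotent state on $\SU_q(2)$. The plan here is to invoke the classification of idempotent states on $\SU_q(2)$ (due to Franz--Skalski), which asserts that every such state is a Haar idempotent, and that the closed quantum subgroups of $\SU_q(2)$ giving rise to them are all classical, namely $\SU_q(2)$ itself, the maximal torus $\T$, the finite cyclic groups $\Z_n$ (embedded in $\T$), and the trivial subgroup $\{e\}$. In particular $|\omega|_r$ is always a Haar idempotent, so Theorem~\ref{thm:cts-fact} applies: there exist a classical compact subgroup $\H\in\{\SU_q(2),\T,\Z_n,\{e\}\}$ with quotient morphism $\pi_\H$ and a group-like unitary $u\in\C(\H)$ such that $\omega=h_\H(\pi_\H(\cdot)u)$.

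It then remains to identify the group-like unitaries in each $\C(\H)$. When $\H$ is a classical compact group, group-like unitaries in $\C(\H)$ coincide with the continuous characters $\H\to\T$ (as a group-like unitary evaluated pointwise satisfies $u(gh)=u(g)u(h)$ and $|u|=1$), so for $\H=\T$ and $\H=\Z_n$ we recover precisely the statement. For $\H=\SU_q(2)$ itself, the only one-dimensional unitary corepresentation is the trivial one (its irreducible corepresentations come in dimensions $1,2,3,\dots$ with the trivial one unique in dimension $1$), hence $u=1$ and $\omega$ is the Haar state of $\SU_q(2)$. The case $\H=\{e\}$ is absorbed into the $\Z_n$-family with $n=1$, yielding the counit.

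The only step beyond the machinery developed in this paper is the classification of the closed quantum subgroups of $\SU_q(2)$ and of the idempotent states supported on them, which is the main obstacle and is treated as a quoted result; once available, the rest of the argument is a routine instantiation of Theorems~\ref{thm:1st-fact} and~\ref{thm:cts-fact} combined with the elementary remark that $\SU_q(2)$ carries no non-trivial one-dimensional corepresentation.
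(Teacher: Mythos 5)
Your proposal is correct and follows essentially the same route as the paper: invoke the classification of idempotent states on $\SU_q(2)$ (Theorem~5.1 of \cite{FST}, where all of them are Haar idempotents coming from the listed subgroups), apply Theorem~\ref{thm:cts-fact}, and observe that $\C(\SU_q(2))$ has no nontrivial group-like unitaries while group-like unitaries on the classical subgroups $\T$ and $\Z_n$ are exactly their characters. The extra detail you supply (explicitly passing through Theorem~\ref{thm:1st-fact} and absorbing the trivial subgroup into the $\Z_n$-family) is a harmless elaboration of the paper's condensed argument.
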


\begin{proof}
By Theorem~5.1 of \cite{FST},
all idempotent states of $\SU_q(2)$ are Haar idempotents
associated with the subgroups listed in the statement.
Then the statement follows from Theorem~\ref{thm:cts-fact}
once we note that  the algebra $\C(\SU_q(2))$ itself does not have any
nontrivial group-like unitaries (i.e.\ $SU_q(2)$ does not have any nontrivial one-dimensional
representations).
\end{proof}

In the case presented above there exist relatively few contractive idempotents and all of them fit in the scheme described in Theorem \ref{thm:cts-fact}. Below we discuss briefly an example of a genuinely quantum (i.e.\ neither commutative nor cocommutative) group where there exist non-Haar idempotent states and thus also more complicated contractive idempotent functionals.

\begin{example}
Consider the quantum group $\SU_{-1}(2)$, which is of Kac type. It is well-known that $\SU_{-1}(2)$ admits the dual of $D_{\infty}$ (the infinite dihedral group) as a compact quantum subgroup, see for example \cite{podles95} or \cite{FSTKaccase}. In the latter notes a relevant surjective $^*$-homomorphism $\pi:\C(\textup{SU}_{-1}(2))\to \C^*(D_{\infty})$ exchanging the comultiplications is shown to be determined by the conditions
\[ \pi(u_{11})= \pi(u_{22}) = \frac{1}{2} t (1+z), \pi(u_{12})= \pi(u_{21}) = \frac{1}{2} t (1-z),\]
where $U=(u_{ij})_{i,j=1}^2 \in M_n (\C(\SU_{-1}(2)))$ is the defining fundamental representation of $\SU_{-1}(2)$ and $t,z$ are the canonical unitary generators of $\C^*(D_{\infty})$ satisfying the commutation relations $t^2=1$ and $ztz=t$. Thus choosing a contractive idempotent functional $\mu$ on  $\C^*(D_{\infty})$ whose left or right absolute value is not a Haar idempotent (i.e., by Proposition \ref{dual}, choosing a coset of a non-normal subgroup of $D_{\infty}$) and composing it with $\pi$ yields a contractive idempotent functional on $\C(\SU_{-1}(2))$ whose absolute value is not a Haar idempotent. A concrete example can be obtained by putting $C=\{t,tz\}$ -- note we  identify the unitary generators of $\C^*(D_{\infty})$ with elements of $D_{\infty}$. In fact the results of \cite{FSTKaccase} and \cite{tomatsu08} make it possible in principle to provide  a full list of the contractive idempotent functionals on $\C(\SU_{-1}(2))$.
\end{example}

\begin{acknow}
The work on this article was completed during the Research in Pairs
visit to the Mathematisches Forschungsinstitut Oberwolfach in August
2012. We are very grateful to MFO for providing ideal research
conditions.

PS thanks the Emil Aaltonen Foundation for support.
We also thank the Finnish Academy of Science and Letters, Vilho, Yrj\"o
and Kalle V\"ais\"al\"a Foundation, for supporting a visit of AS to
the University of Oulu. AS was partly supported by the National Science Centre (NCN) grant no.~2011/01/B/ST1/05011. NS was partially supported by the NSERC grant DG 312515-2010.

We thank the referee for helpful comments.
\end{acknow}


\begin{thebibliography}{Run09}


\bibitem[BT03]{bedos-tuset}
E.~B{\'e}dos and L.~Tuset, Amenability and co-amenability for locally
  compact quantum groups, \emph{Internat.\,J.\,Math.} \textbf{14} (2003), 865--884.

\bibitem[CL02]{Chu-Lau} C.H.\,Chu and A.T.-M.\,Lau, ``Harmonic functions on groups and Fourier algebras,''
Lecture Notes in Mathematics, 1782. Springer-Verlag, Berlin, 2002.

\bibitem[CL06]{CL2} C.H.\,Chu and A.T.-M.\,Lau, Jordan structures in harmonic functions and Fourier algebras on homogeneous spaces, \emph{Math.\,Ann.} \textbf{336} (2006), no.\,4, 803–-840.

\bibitem[Coh60]{Cohen} P.\,Cohen, On homomorphisms of group algebras, \emph{Amer.\,J.\,Math.} \textbf{82} (1960), 213--226.


\bibitem[DKSS]{DKSS}
M.~Daws, P.~Kasprzak, A.~Skalski and P.~So\l tan, Closed quantum subgroups of locally
  compact quantum groups, \emph{Adv.\,Math.} \textbf{231} (2012), 3473--3501.


\bibitem[EOR01]{EOR} E.~G. Effros, N.~Ozawa, and Z.-J. Ruan, \emph{On injectivity and nuclearity for
  operator spaces}, Duke Math. J. \textbf{110} (2001), 489--521.


\bibitem[FS09$_1$]{FSJAlg}
U.\,Franz and A.\,Skalski, On idempotent states on quantum groups, \emph {J.\,of Algebra} \textbf{322} (2009),
no.\,5, 1774--1802.

\bibitem[FS09$_2$]{FS} U.\,Franz and  A.\,Skalski,  A new characterisation
  of idempotent states on finite and compact quantum groups,
  \emph{C.\,R.\,Math.\,Acad.\,Sci.\,Paris}
 \textbf{347}  (2009), no.\,17--18, 991--996, (\emph{expanded version available at  arXiv:0906.2362}).



\bibitem[FST$_1$]{FST}
U.~Franz, A.~Skalski, and R.~Tomatsu, Idempotent states on compact
  quantum groups and their classification on {${\rm U}_q(2)$}, {${\rm SU}_q(2)$}, and
  {${\rm SO}_q(3)$}, \emph{J.\,Noncomm.\,Geom.}, to appear, available at arXiv:0903.2363.

\bibitem[FST$_2$]{FSTKaccase}
U.~Franz, A.~Skalski and R.~Tomatsu, Idempotent states on the Kac algebra ${\rm U}_{-1}(2)$, \emph{unpublished notes}.


\bibitem[Gre65]{greenleaf:homo}
F.~P. Greenleaf, Norm decreasing homomorphisms of group algebras, \emph{Pacific J.\,Math.} \textbf{15} (1965), 1187--1219.

\bibitem[Hos86]{Host} B.\,Host, Le th\'eor\`eme
des idempotents dans $B(G)$, \emph{Bull.\,Soc.\,Math.\,France} \textbf{114} (1986), no.\,2,  215--223.


\bibitem[IS05]
{IS1} M.\,Ilie and N.\,Spronk,
Completely bounded homomorphisms of the Fourier algebras,
\emph{J.\,Funct.\,Anal.} \textbf{225} (2005), no.\,2, 480–-499.


\bibitem[KNR]{KNR} M.\,Kalantar, M.\,Neufang and Z.-J.\,Ruan, Poisson boundaries over locally compact quantum groups,  \emph{preprint}, available at arXiv:1111.5828.

\bibitem[Kus01]{kus:univ} J.\,Kustermans, Locally compact quantum groups in the universal setting,
\emph{Internat.\,J.\,Math.} \textbf{12} (2001), no.\,3, 289-–338.

\bibitem[KV00]{KV} J.\,Kustermans and S.\,Vaes, Locally compact quantum groups
  \emph{Ann.\,Sci.\,{\'E}cole Norm.\,Sup.\,(4)} \textbf{33}
  (2000), no.\,9,  837--934.

\bibitem[Lan95]{Lance}
E.C.\,Lance, ``Hilbert {C}*-modules. {A} toolkit for operator
  algebraists,'' Cambridge University Press, Cambridge, 1995.


\bibitem[NSSS]{NSSS}
M.\,Neufang, P.\,Salmi, A.\,Skalski and N.\,Spronk, Harmonic functions associated to contractive quantum measures, \emph{in preparation}.


\bibitem[Pod95]{podles95}
P.~Podle{\'s}, Symmetries of quantum spaces. {S}ubgroups and quotient spaces of
  quantum {${\rm SU}(2)$} and {${\rm SO}(3)$} groups,
\emph{ Comm.\,Math.\,Phys.} \textbf{170} (1995), no.\,1, 1--20.

\bibitem[Sal11]{S} P.\,Salmi, Compact quantum subgroups and left
  invariant $C^*$-subalgebras of locally compact quantum groups,
   \emph{J.\,Funct.\,Anal.} {\bf 261} (2011), no.\,1, 1--24.

\bibitem[Sal]{Pekkasurvey} P.\,Salmi, Idempotent states on locally compact
groups and quantum groups,
\emph{Algebraic Methods in Functional Analysis,
The Victor Shulman Anniversary Volume},
I.G. Todorov and L. Turowska (Eds.),
Operator Theory: Advances and Applications, Vol. 231,
Birkh\"auser, Basel, 2013, to appear,
available at: arXiv:1209.0314.

\bibitem[SaS12]{salmi-skalski:idem}
P.~Salmi and A.~Skalski, Idempotent states on locally compact quantum
  groups,   \emph{Quart.\,J.\,Math.}  \textbf{63} (2012), no.\,4, 1009--1032.


\bibitem[SaS]{TRO-paper}
P.~Salmi and A.~Skalski, Inclusions of ternary rings of operators and
         conditional expectations, \emph{preprint}, available at  arXiv:1209.4575.

\bibitem[Tak02]{takesaki:vol1}
M.~Takesaki, \emph{Theory of operator algebras. {I}}, Encyclopaedia of
  Mathematical Sciences, vol. 124, Springer-Verlag, Berlin, 2002.

\bibitem[Tom07]{tomatsu:coideal}
R.\,Tomatsu, A characterization of right coideals  of quotient type and its
  application to classification of Poisson boundaries,
  \emph{Comm.\,Math.\,Phys.} \textbf{275} (2007), no.\,1, 271--296.

\bibitem[Tom08]{tomatsu08}
R.\,Tomatsu, Compact quantum ergodic systems, \emph{J.\,Funct.\,Anal.} \textbf{254} (2008), no.\,1, 1--83.


\bibitem[Vae01]{vaes:thesis}
S.~Vaes, ``Locally compact quantum groups,'' Ph.D.\,Thesis, Katholieke
  Universiteit Leuven, Leuven, 2001.

\bibitem[Wor87]{woronowicz87b}
S.L.\,Woronowicz, Twisted {${\rm SU}(2)$} group. {A}n example of a noncommutative
  differential calculus, \emph{Publ.\,Res.\,Inst.\,Math.\,Sci.}, \textbf{23} (1987), no.\,1, 117--181.

\bibitem[Wor98]{woronowicz98}
S.L.\,Woronowicz, Compact quantum groups, in  {\em
  Sym{\'e}tries Quantiques, Les Houches, Session LXIV, 1995}, pp.\ 845--884.

\bibitem[You83]{youngson:cc-proj}
M.~A. Youngson, Completely contractive projections on {$C^{\ast}
  $}-algebras, \emph{Quart.\,J.\,Math.\,Oxford Ser.\,(2)} \textbf{34} (1983), 507--511.

\end{thebibliography}
\end{document}